\newtheorem{theorem}{Theorem}[section]
\newtheorem{proposition}[theorem]{Proposition}
\theoremstyle{definition}
\newtheorem{definition}[theorem]{Definition}
\newtheorem{example}[theorem]{Example}
\newtheorem*{maintheorem}{Theorem \ref{thm:main}}
\newtheorem{remark}[theorem]{Remark}
\numberwithin{equation}{section}
\def\G{{\Gamma}}
\def\m{{MCG_*(\Sigma)}}
\def\A{{\mathbb{A}}}
\def\SA{{S\A}}
\def\T{{\mathbb{T}}}
\def\ST{{S\T}}
\begin{document}

\noindent                                             
\begin{picture}(150,36)                               
\put(5,20){\tiny{To appear in}}                       
\put(5,7){\textbf{Topology Proceedings}}
\put(0,0){\framebox(140,34){}}                      
\put(2,2){\framebox(136,30){}}                   
\end{picture}                                      

\vspace{0.5in}

\renewcommand{\bf}{\bfseries}
\renewcommand{\sc}{\scshape}

\vspace{0.5in}

\title[A Degree Theorem for the Space of Ribbon Graphs]
{A Degree Theorem for the Space of Ribbon Graphs}

\author{Bradley Forrest}
\address{101 Vera King Farris Drive; Galloway, NJ 08205}
\email{bradley.forrest@stockton.edu}
\subjclass[2010]{Primary 20F65, 32G15}

\keywords{Mapping Class Groups, Ribbon Graphs}

\begin{abstract}
We show that the space of ribbon graphs of an orientable, basepointed, genus $g$ surface $\Sigma$ with $p$ punctures can be filtered by simplicial complexes.  Specifically, the space of ribbon graphs contains $\ST_{\Sigma, 0} \subset \ST_{\Sigma, 1} \subset \ldots \subset \ST_{\Sigma, 4g+2p-4}$ where $\ST_{\Sigma, k}$ is a $k$-dimensional, $(k-1)$-connected simplicial complex that is invariant under the action of the basepoint preserving mapping class group of $\Sigma$.
\end{abstract}

\maketitle

\section{\bf Introduction}

Hatcher and Vogtmann developed Auter space in \cite{hatcher-vogtmann} as a means to study the homology of $Aut(F_n)$, the automorphism group of a finitely generated free group.  Auter space $\A_n$ is an analog, in the context of $Aut(F_n)$, of Culler and Vogtmann's Outer space \cite{culler-vogtmann}. More specifically, $\A_n$ is contractible and contains a simplicial spine $\SA_n$ that $Aut(F_n)$ acts on cocompactly and simplicially with finite simplex stabilizers.  Outer and Auter space are both unions of open simplicies defined by certain classes of graphs, however, in the latter case the graphs come equipped with basepoints.  Hatcher and Vogtmann took advantage of this technical difference to prove \emph{the degree theorem} \cite{hatcher-vogtmann}, that Auter space contains a sequence of simplicial complexes $\SA_{n,0} \subset \SA_{n,1} \subset \ldots \subset \SA_{n, 2n-2} = \SA_n$ indexed by the degree of the spanning graphs.  These spaces are known as \emph{the degree complexes} and have many convenient properties.  In particular, $\SA_{n, k}$ is $k$-dimensional, $(k-1)$-connected, and preserved under the action of $Aut(F_n)$.  In addition to powering Hatcher and Vogtmann's investigation of the homology of $Aut(F_n)$ in  \cite{hatcher-vogtmann}, the degree theorem was used by Armstrong, Forrest, and Vogtmann to compute a relatively elegant presentation of $Aut(F_n)$ in \cite{afv}.

The long standing analogy between automorphism groups of free groups and mapping class groups of orientable surfaces has driven many important results \cite{bestvina-handel} \cite{culler-vogtmann} \cite{kapovich}.  Traditionally, the analogy has been used to inform the study of $Aut(F_n)$, with techniques developed in the context of mapping class groups producing results for $Aut(F_n)$.  In this work, we use this relationship in the opposite direction to develop an analog of the degree theorem in the context of mapping class groups.  To fix our notation, let $\Sigma$ be an orientable genus $g$ surface with $p \geq 1$ punctures and a basepoint.  Further let $\m$ be the basepoint preserving mapping class group of $\Sigma$, the group of isotopy classes of orientation preserving homeomorphisms of $\Sigma$ in which both the isotopies and homeomorphisms preserve the basepoint.  Playing the roles of Auter space and its simplicial spine are the space $\T_{\Sigma}$ and complex $\ST_{\Sigma}$ of ribbon graphs that can be drawn in $\Sigma$.  Both of these spaces are related to the decorated Teichm\"uller space of $\Sigma$ which was developed by Penner in \cite{penner} and independently by Bowditch and Epstien in \cite{bowditch-epstein}.  Taking the basepoint as a puncture, $\T_{\Sigma}$ is the projectivized decorated Teichm\"uller space with decorated non-basepoint punctures \cite{penner-book}.  Decorated Teichm\"uller spaces and related spaces have played important roles in the study of orientable surfaces and their mapping class groups \cite{horak} \cite{penner} \cite{penner2} \cite{penner-book}.

Our main theorem is the following analog of Hatcher and Vogtmann's degree theorem:

\begin{theorem}
The space of basepointed ribbon graphs $\T_{\Sigma}$ contains simplicial complexes $\ST_{\Sigma, 0} \subset \ST_{\Sigma, 1} \subset \ldots \subset \ST_{\Sigma, 4g+2p-4} = \ST_{\Sigma}$.  The complex $\ST_{\Sigma, k}$ is $k$-dimensional, $(k-1)$-connected, and preserved under the action of $\m$.
\label{thm:main}
\end{theorem}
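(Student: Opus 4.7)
The plan is to carry out, in the ribbon-graph setting, the step-by-step strategy that Hatcher and Vogtmann use in \cite{hatcher-vogtmann} for Auter space. First I would introduce an integer-valued \emph{degree} function on basepointed ribbon graphs in $\Sigma$, defined combinatorially from the valences of the vertices and the distinguished role of the basepoint, so that it descends to a well-defined degree on the simplices of $\ST_\Sigma$. A short Euler-characteristic calculation, using that top-dimensional simplices of $\ST_\Sigma$ correspond to trivalent fat graphs filling $\Sigma$, should show that this degree takes integer values between $0$ and $4g+2p-4$, with the maximum attained precisely on the top simplices. I would then define $\ST_{\Sigma,k}$ to be the subcomplex of $\ST_\Sigma$ spanned by ribbon graphs of degree at most $k$. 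The claims that $\ST_{\Sigma,k}$ has dimension $k$ and is preserved by $\m$ are then immediate from the construction, since the $\m$-action permutes ribbon-graph simplices while preserving the combinatorial data entering the degree.

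The substantive content is the connectivity statement. My approach would exploit the known contractibility of $\T_\Sigma$, which follows from its identification with a spine of Penner's decorated Teichm\"uller space as recalled in the introduction. Given a map $S^j \to \ST_{\Sigma,k}$ with $j \leq k-1$, it bounds a disk in the full complex $\ST_\Sigma$, and the task is to push that disk into the sublevel set $\{\deg \leq k\}$. I would run a Bestvina--Brady style descending-link argument organized by degree: it should suffice to show that for every simplex $\sigma$ of degree strictly greater than $k$, the descending link of $\sigma$ with respect to $\deg$ is $(k-1)$-connected. Standard poset techniques then upgrade this local statement into the desired global connectivity of $\ST_{\Sigma,k}$.

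The descending moves at a simplex $\sigma$ with underlying ribbon graph $G$ are \emph{blow-ups} of a high-valence vertex $v$: one partitions the cyclic set of half-edges incident to $v$ into two arcs and inserts a new edge, splitting $v$ into two new vertices of lower valence and strictly reducing the degree. Because $G$ carries a ribbon structure, only partitions respecting the cyclic order at $v$ are admissible, so the admissible blow-ups at $v$ are indexed by a poset of \emph{noncrossing} bipartitions of its half-edge set, with the basepoint vertex treated separately because of its shifted contribution to the degree. The main technical obstacle, and what I expect to dominate the proof, is establishing that this noncrossing blow-up poset is sufficiently highly connected, and in particular correctly handling the basepoint case. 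I would attack it by induction on $\mathrm{val}(v)$, either by exhibiting an explicit contraction onto a star-shaped subposet or by a direct nerve-type argument, and then assemble the global statement from the descending-link machinery above.
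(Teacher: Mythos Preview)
Your overall shape---define degree, take sublevel complexes, reduce to a connectivity statement and attack it via the contractibility of $\T_\Sigma$---matches the paper. But the mechanism you propose for the connectivity step is not the paper's, and as written it contains a genuine error.

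The error is in your identification of the descending moves. You assert that blowing up a high-valence vertex ``strictly reduces the degree.'' It does not. With $\deg(\Gamma)=\sum_{x\ne w}(|x|-2)$, splitting a non-basepoint vertex $v$ of valence $d$ into vertices of valences $a+1$ and $b+1$ with $a+b=d$ changes the local contribution from $d-2$ to $(a-1)+(b-1)=d-2$: the degree is \emph{unchanged}. Degree only drops under forest collapses that merge a vertex into the basepoint. Consequently degree is constant along many edges of $\ST_\Sigma$, so it is not a Bestvina--Brady Morse function in the usual sense, and your proposed descending-link poset of ``noncrossing blow-ups at $v$'' is not the descending link at all. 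Any Morse-theoretic approach would first need a refinement of the degree (and then a quite different local analysis, since the genuine degree-decreasing moves are collapses toward the basepoint, not blow-ups).

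The paper takes a different route, following Hatcher--Vogtmann's Cerf-theory argument rather than a descending-link argument. One works in $\T_\Sigma$ with metric graphs, uses the height function to the basepoint to define critical points, and homotopes a map $D^k\to\T_\Sigma$ into $\T_{\Sigma,k}$ by iterating two moves: \emph{canonical splitting} (a forest collapse that pushes branches toward the basepoint and genuinely lowers degree) and \emph{sliding in $\varepsilon$-cones} (perturbing attaching points of branches off critical vertices). The ribbon structure enters exactly where you anticipated---one must slide only within the two downward half-edges adjacent in the cyclic order, and the paper packages this constraint into an ``attaching space'' $A$ and argues by general position in $A$. So your instinct that the cyclic-order constraint is the crux is correct; it just governs the sliding step of the Cerf argument rather than a blow-up poset.
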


While this result is motivated by the successful applications of the degree theorem in the study of $Aut(F_n)$, applications of Theorem \ref{thm:main} have yet to be produced.  The complexes $\ST_{\Sigma, k}$ lack a key property that Hatcher and Vogtmann's degree complexes enjoy.  Specifically, the quotient spaces $\SA_{n, k} / Aut(F_n)$ stabilize for large $n$ while the quotients $\ST_{\Sigma, k} / \m$ grow unbounded as either the genus or the number of puncters of $\Sigma$ increases.

This paper is organized as follows.  Section 2 introduces the relevant background regarding the groups $\m$ and $Aut(F_n)$ and the topological spaces $\T_{\Sigma}$, $S\T_{\Sigma}$, $\A_n$ and $S\A_n$.  The overall idea of the proof is outlined in Section 3, and the primary methods that we use are introduced.  Section 4 adapts Hatcher and Vogtmann's degree theorem to the context of $\T_{\Sigma}$, proving Theorem \ref{thm:main}.  In Section 5, we take a closer look at the complex $S\T_{\Sigma, 2}$ and possible applications for and open questions regarding the simplicial complexes $S\T_{\Sigma, k}$.

\section{\bf Background}

In this section we review the basic definitions and topological constructions 
regarding the space of ribbon graphs and Auter space.  For a more detailed treatment, especially proofs of many facts claimed in this section, see \cite{culler-vogtmann}, \cite{hatcher-vogtmann}, \cite{horak},  \cite{mulase-penkava} and \cite{penner-book}.  All graphs and surfaces in the following definitions are basepointed.  All maps between surfaces and graphs are basepoint preserving.

\subsection{Auter Space}

Fix a positive integer $n$ and let $R_n$ be a graph with one vertex and $n$ edges.  A \emph{marking} on a finite connected graph $\G$ is a homotopy equivalence $\phi \colon R_{n} \rightarrow \Gamma$.  The marking on $\Gamma$ is most simply denoted by choosing a maximal forest in $\Gamma$, choosing orientations for the edges not in the maximal forest and labeling those edges by the elements of $\pi_1(R_n)$ that they map to under a homotopy inverse $\phi^{-1} \colon \Gamma \to R_n$.  This labeling is not canonical, it depends on the chosen maximal forest.  Two marked graphs $(\G_{1}, \phi_1)$ and $(\G_{2}, \phi_2)$ are \emph{equivalent} if there exists a graph isomorphism $h \colon \G_{1} \rightarrow \G_{2}$ so that $h \circ \phi_{1}$ is homotopic to  $\phi_{2}$.  A marked graph $(\G_{2}, \phi_{2})$ is obtained from $(\G_{1}, \phi_{1})$ by a forest collapse if there exists a quotient map $q \colon \G_{1} \rightarrow \G_{2}$ that collapses a forest in $\G_{1}$ to a disjoint union of vertices in $\G_{2}$, and is otherwise the identity map, that makes $q \circ \phi_{1}$ homotopic to $\phi_{2}$.  The set of equivalence classes of marked graphs is a poset where $(\G_{2},\phi_{2}) \leq (\G_{1},\phi_{1})$ if $(\G_{2},\phi_{2})$ can be obtained from $(\G_{1},\phi_{1})$ by a forest collapse.  Auter space is a union of open simplices where each open $i$-simplex is given by an equivalence class of marked graphs with $i+1$ edges for which all non-basepoint vertices are required to have valence at least 3 and the basepoint has valence at least 2.  These simplices glue together by the ordering described above, specifically the simplex corresponding to $(\G_{2},\phi_{2})$ is a face of the simplex corresponding to $(\G_{1},\phi_{1})$ if $(\G_{2},\phi_{2}) \leq (\G_{1},\phi_{1})$.  The open simplex for $(\G, \phi)$ can be constructed by endowing $\G$ with a normalized metric, enforcing that the sum of the lengths of the edges of $\G$ is 1.  The open $i$-simplex can be realized in  $\mathbb{R}^{i+1}$ where the coordinates correspond to the lengths of the $i+1$ edges.  In this way, moving within a simplex changes only the metric on $\G$.

Often the set of graphs is restricted further to exclude graphs with separating edges.  We will employ this restriction and denote the resulting space as $\A_n$.  After restricting to graphs without separating edges, the geometric realization of the poset of marked graphs is the simplicial spine $\SA_n$ which is contained in the barycentric subdivision of $\A_n$.   Auter space deformation retracts to $\SA_n$.  The deformation retraction alters only the metrics of the graphs so, for every open simplex $\Delta$ in $\A_n$, points in $\Delta$ remain so throughout the deformation retraction.

For $\alpha \in Aut(F_n)$, there is a homotopy equivalence $\alpha' \colon R_n \to R_n$ that realizes $\alpha$.  This gives a right action of $Aut(F_n)$ on $\A_n$ by precomposition, $(\G, \phi) \alpha = (\G, \phi \circ \alpha')$.  This action only alters the marking on $(\G, \phi)$ and does so by applying $\alpha^{-1}$ the labels on $\G$.

\subsection{Ribbon Graphs}

A \emph{ribbon graph} is a finite connected graph $\G$ with, for each vertex in $\G$, a cyclic ordering of the half edges incident to that vertex.  The cyclic orderings are together called the \emph{ribbon structure} of the ribbon graph and will be denoted $O(v)$ where $v$ is a vertex of $\G$.  We refer to $\G$ as the \emph{underlying graph}, and denote a ribbon graph by the pair $(\G, O)$. 

As in \cite{mulase-penkava}, given a ribbon graph $(\G, O)$ we can construct an orientable basepointed surface.  The construction consists of gluing once punctured disks on to $\G$ so that the boundaries of the disks identify with certain edge cycles.  Let  $E = \{e_{1}, e_{2}, \ldots, e_{l-1}, e_{l} \}$ be a cyclic ordered list of directed edges in $\G$, where $i(e_k)$ and $t(e_k)$ are the initial and terminal vertices of $e_k$.  This list $E$ is a \emph{boundary cycle} for $(\G, O)$ if $t(e_k) = i(e_{k+1})$ and $e_{k+1}$ immediately follows $e_k$ in the order $O(t(e_k))$ for all $k$ including $k = l$.  The surface given by gluing once punctured disks onto $\G$ along boundary cycles is the \emph{ribbon surface} of $\G$ and is denoted $|\G, O|$.  Note that $\G$ naturally includes in its ribbon surface.

\begin{remark}
Another way to construct $|\G, O|$ is to fatten the edges of $\G$ so that at each vertex the ends of the ribbons glue together in agreement with $O$.  The surface $|\G, O|$ produced by either edge fattening or gluing once punctured disks depends on the cyclic ordering $O$ given to $\G$.  Consider the graph $\G$  in Frame (A) of Figure \ref{fig:ribbon}.  Frames (B) and (C) of Figure \ref{fig:ribbon} give the result of these surface constructions for two different ribbon structures on $\G$.  The surface in Frame (B) is homeomorphic to a sphere with 3 punctures while the surface in Frame (C) is homeomorphic to a once punctured torus.
\end{remark}

\begin{figure}
\subfloat[]{\fbox{\includegraphics[scale = .3975]{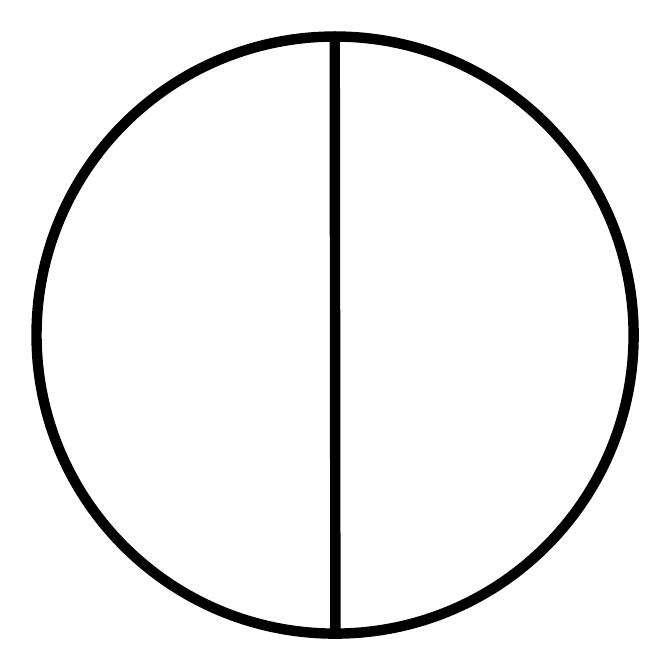}}}
\hfill
\subfloat[]{\fbox{\includegraphics[scale = .35]{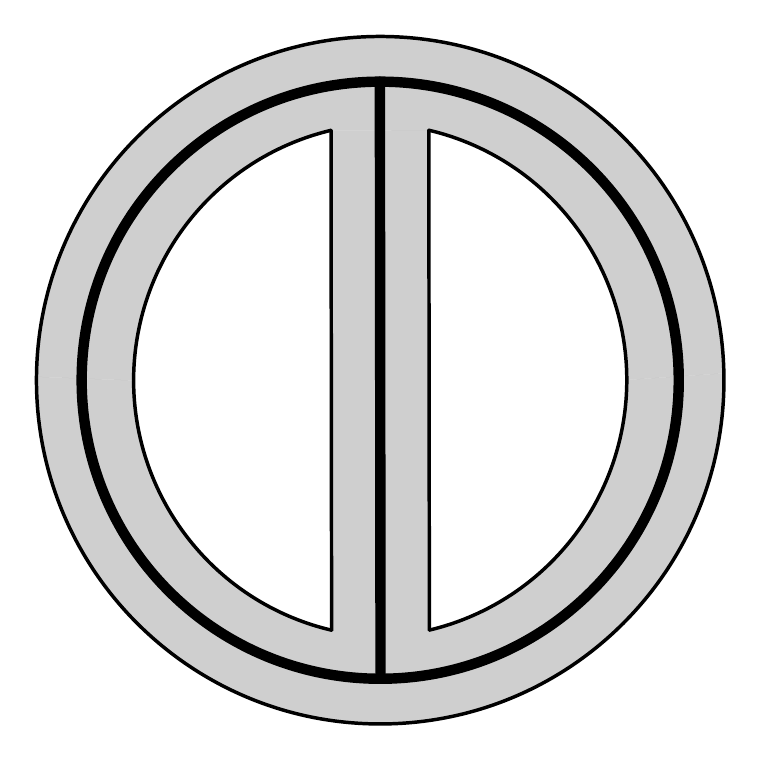}}}
\hfill
\subfloat[]{\fbox{\includegraphics[scale = .358]{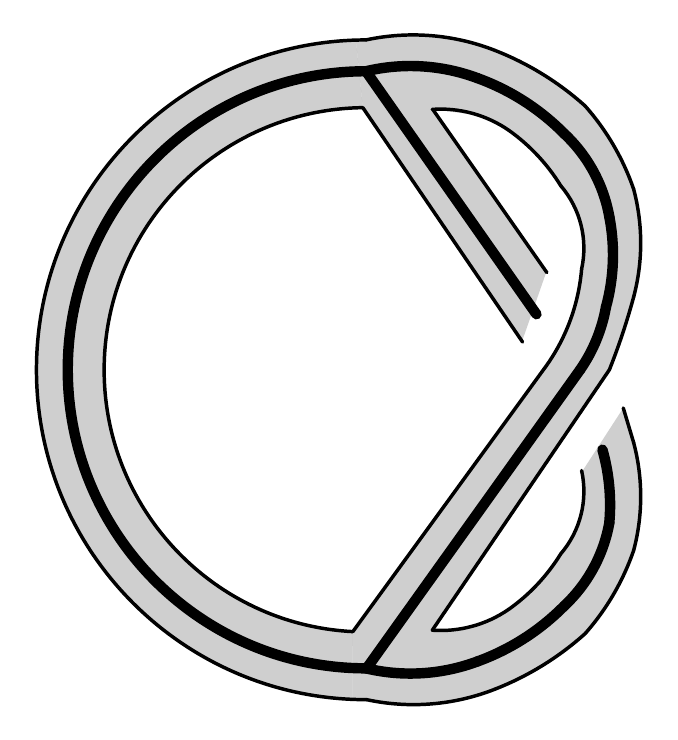}}}
\caption{}
\label{fig:ribbon}
\end{figure}

A \emph{marked surface} is a surface $\Sigma$ together with a homotopy equivalence $\psi \colon R_{2g+p-1} \rightarrow \Sigma$.  Two marked surfaces $(\Sigma_1, \psi_1)$ and $(\Sigma_2, \psi_2)$  are \emph{equivalent} if there exists an orientation preserving homeomorphism $h \colon \Sigma_1 \rightarrow \Sigma_2$ so that $h \circ \psi_1$ is homotopic to $\psi_2$.  A marked graph $(\G, \phi)$ \emph{can be drawn in} the marked surface $(\Sigma, \psi)$ if  there is a ribbon structure $O$ so that $(|\G, O|, i \circ \phi)$ is equivalent to $(\Sigma, \psi)$ where $i \colon \G \rightarrow |\G, O|$ is inclusion.  The ribbon structure $O$ that draws $(\G, \phi)$ in $(\Sigma, \psi)$ is unique.

\begin{definition}
The \emph{ribbon graph space} $\T_{\Sigma}$ for the marked surface $(\Sigma, \psi)$ is the subspace of $\A_{2g+p-1}$ made up of simplicies corresponding to marked graphs that can be drawn in $(\Sigma, \psi)$.  The \emph{ribbon graph complex} $\ST_{\Sigma}$ for the marked surface $(\Sigma, \psi)$ is the subcomplex of $\SA_{2g+p-1}$ spanned by the graphs that can be drawn in $(\Sigma, \psi)$.
\end{definition}

\begin{remark}
While we have defined $\T_{\Sigma}$ and $\ST_{\Sigma}$ as subspaces of $\A_{2g+p-1}$ and $\SA_{2g+p-1}$ respectively, these definitions are more naturally made independent of Auter space.  Our choice to define the ribbon graph space and complex as subspaces in this way comes with an abuse of notation; the marking $\psi$ is not canonical and determines an embedding of $(\T_{\Sigma}, \ST_{\Sigma})$ in $(\A_{2g+p-1}, \SA_{2g+p-1})$.  We omit $\psi$ from the notation as it is not relevant to our arguments.  Further, we use these definitions of $\T_{\Sigma}$ and $\ST_{\Sigma}$ because we will employ the methods of Hatcher and Vogtmann from \cite{hatcher-vogtmann} to prove our main result.  Given a map from a $k$-dimensional sphere $D^k$ to $\A_{2g+p-1}$, Hatcher and Vogtmann constructed an explicit homotopy moving the range of the map into a particular subcomplex of $\A_{2g+p-1}$.  Since $\A_{2g+p-1}$ is contractible, this shows that the subcomplex is $(k-1)$-connected.  The primary observation of this work is that this homotopy can be chosen to preserve $\T_{\Sigma}$.
\end{remark}

The space $\T_{\Sigma}$ is a contractible union of open simplices.  Moreover, if $\Delta$ is an open simplex in $\T_{\Sigma}$ and $\delta$ is one of its faces in $\A_{2g+p-1}$, then $\delta$ is in $\T_{\Sigma}$.  To see this, note that collapsing a forest in a ribbon graph induces a canonical ribbon structure on the resulting graph.  Moreover, let $(\G, O)$ be a ribbon graph, $F$ be a forest in $\G$, and $O'$ be the ribbon structure on $\G / F$ induced by $O$.  Then $|(\G,O)|$ is homeomorphic to $|(\G/F,O')|$ by a map that shrinks the ribbons in $|(\G,O)|$ corresponding to $F$.  Figure \ref{fig:edgecollapse} illustrates the ribbon structure induced by an edge collapse.  Specifically, Frame (B) of Figure \ref{fig:edgecollapse} gives the ribbon graph resulting from collapsing the middle edge in Frame (A).

\begin{figure}
\subfloat[]{\fbox{\includegraphics[scale = .216]{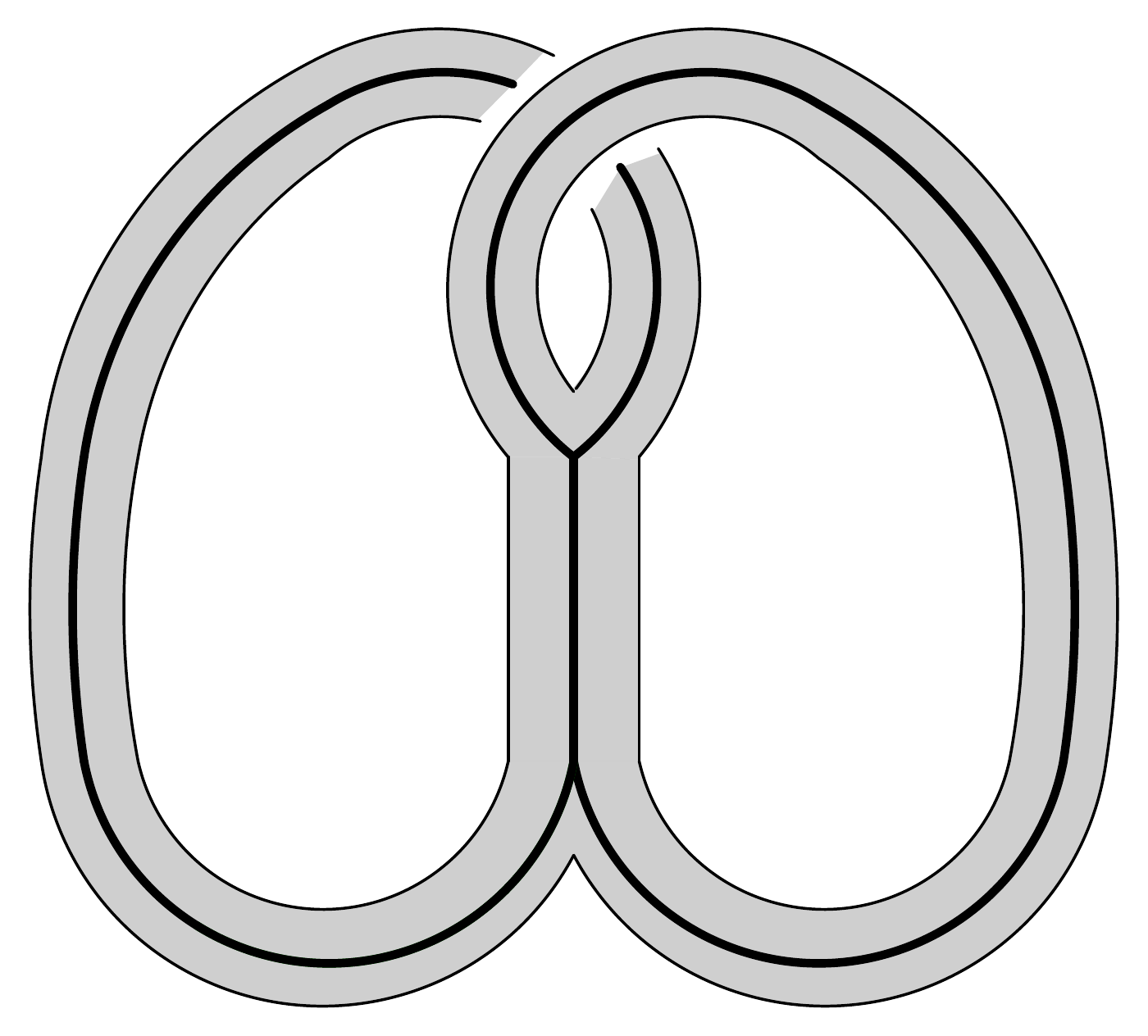}}}
\qquad
\qquad
\subfloat[]{\fbox{\includegraphics[scale = .3]{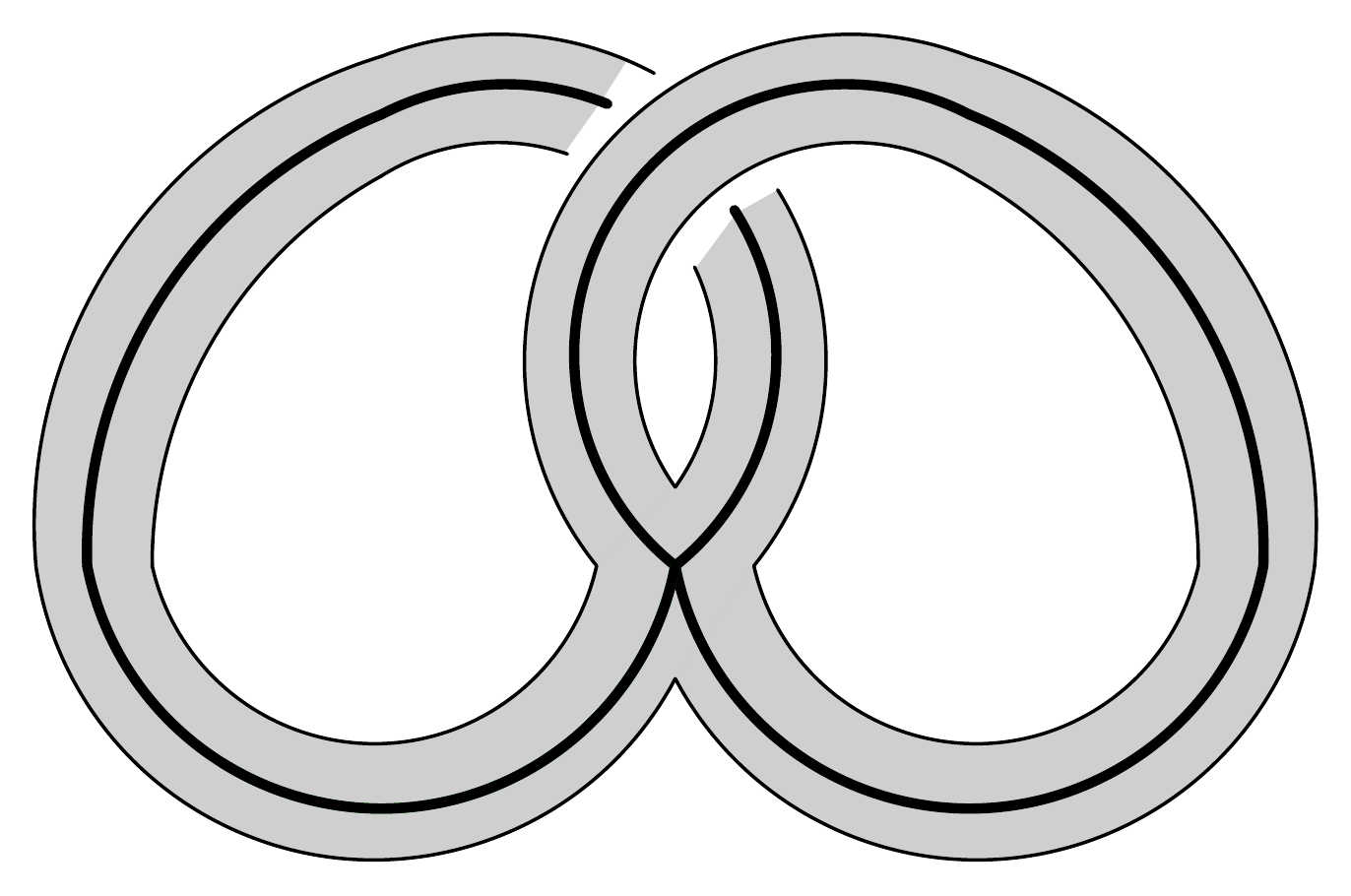}}}
\caption{}
\label{fig:edgecollapse}
\end{figure}

Given $(\G,O)$ with vertex $v$, an \emph{allowed expansion} of $v$ is a partition of the half edges incident to $v$ into two sets of successive half edges.  More formally, the partition $\{ A, B \}$ is an allowed expansion if $O(v)$ can be written as $a_1, a_2, \ldots a_l, b_1, b_2, \ldots b_m$ where $a_i \in A$ and $b_j \in B$ for all $i$ and $j$.  Allowed expansions correspond to edge expansions that can occur within $\T_{\Sigma}$.  Let $(\G,\phi) \in \T_{\Sigma}$ with ribbon structure $O$.  Let $\G'$ be the graph made by replacing vertex $v$ with an edge $e$ and two new vertices $v_1$ and $v_2$ and by attaching the half edges in $A$ to $v_1$ and the half edges in $B$ to $v_2$.  A ribbon structure $O'$ and a marking $\phi'$ are induced by $O$ and $\phi$ respectively so that $(\G', \phi') \in \T_{\Sigma}$ and $O'$ draws $(\G', \phi')$ in $(\Sigma, \psi)$.  Only allowed expansions preserve $\T_{\Sigma}$, graphs resulting from edge expansions that do not respect $O$ cannot be drawn in $(\Sigma, \psi)$.

\begin{figure}
\begin{picture}(400,80)
\put(0,10){\subfloat[]{\fbox{\includegraphics[scale = .4]{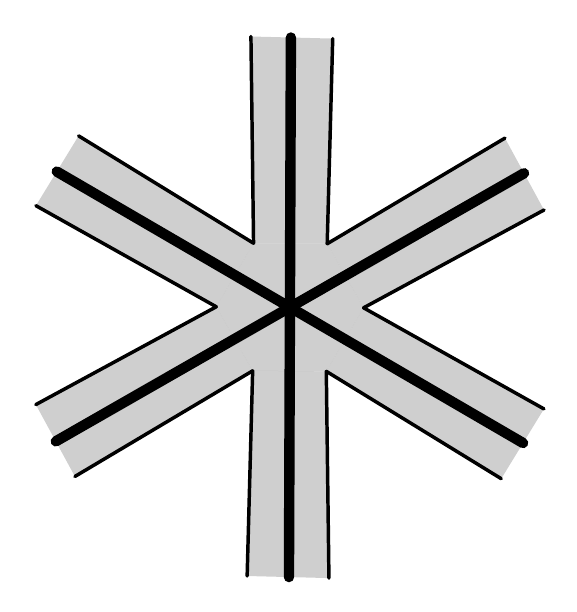}}}}
\put(114,10){\subfloat[]{\fbox{\includegraphics[scale = .324]{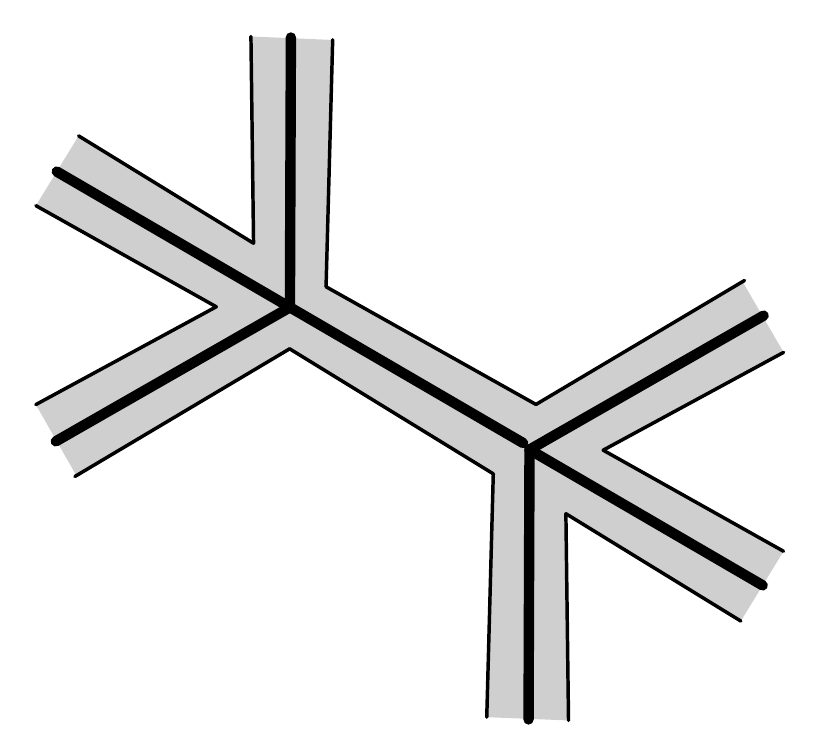}}}}
\put(238,10){\subfloat[]{\fbox{\includegraphics[scale = .314]{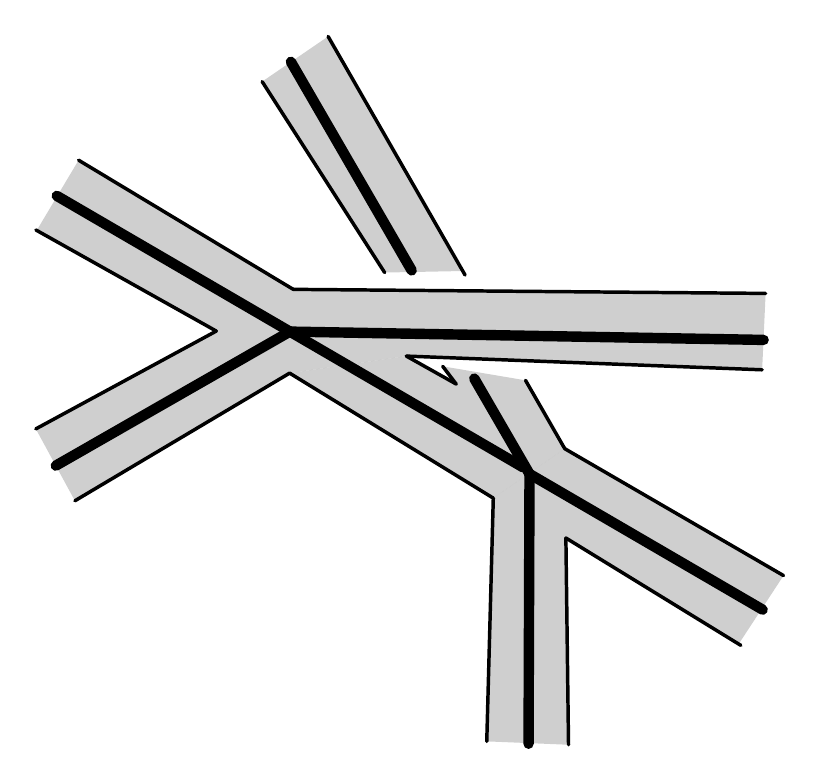}}}}
\put(150,35){$e$}
\put(150,55){$v_1$}
\put(173,20){$v_2$}
\put(273,33){$e$}
\put(263,59){$v_1$}
\put(295,20){$v_2$}
\end{picture}
\caption{}
\label{fig:edgeexpand}
\end{figure}

Frame (B) of Figure \ref{fig:edgeexpand} gives part of a ribbon graph obtained by an allowed expansion of the vertex in Frame (A).  Frame (C) of Figure \ref{fig:edgeexpand} illustrates an edge expansion that is not given by an allowed expansion of Frame (A).  

The homotopy equivalence $\psi \colon R_{2g+p-1} \to \Sigma$ embeds $\m$ as a subgroup into $Aut(F_{2g+p-1})$, where an automorphism is a mapping class if and only if it stabilizes the cycle surrounding each puncture \cite{zieschang}.  The action of $Aut(F_{2g+p-1})$ on $\A_{2g+p-1}$ induces an action of $\m$ on $\A_{2g+p-1}$ and $\T_{\Sigma}$ is preserved by this action.  The action of $\m$ on $\T_{\Sigma}$ only alters the marking on each point, represented by a change in the labels, and leaves the ribbon structure and metric fixed.

\section{\bf Summary of Proof of Degree Theorem}

In this section the basic definitions required for the proof of the degree theorem will be given and the overall idea of the proof will be outlined.  These ideas are introduced in Section 3 of \cite{hatcher-vogtmann}, and are discussed briefly here.

\subsection{Degree Spaces and Subcomplexes}

\begin{definition}
    The \emph{degree} of a finite connected graph $\G$ 
    with basepoint $w$ is:
    \begin{center}
	$\deg(\G) = \displaystyle\sum_{x \in V\G, \; x \not = w} \big( |x| - 
	2 \big)$,
    \end{center}
    where the sum is taken over all non-basepoint vertices and $|x|$ 
    denotes the valence of $x$.
\end{definition}

The degree of a graph generally corresponds to the amount of branching that does not occur at the basepoint.  This definition extends to ribbon graphs; the degree of a ribbon graph is the degree of its underlying graph.  For example, Figure \ref{fig:ribbonsplit} shows a ribbon graph whose bottommost vertex is the basepoint.  This ribbon graph has degree 3.

\begin{figure}
\fbox{\includegraphics[scale = .3]{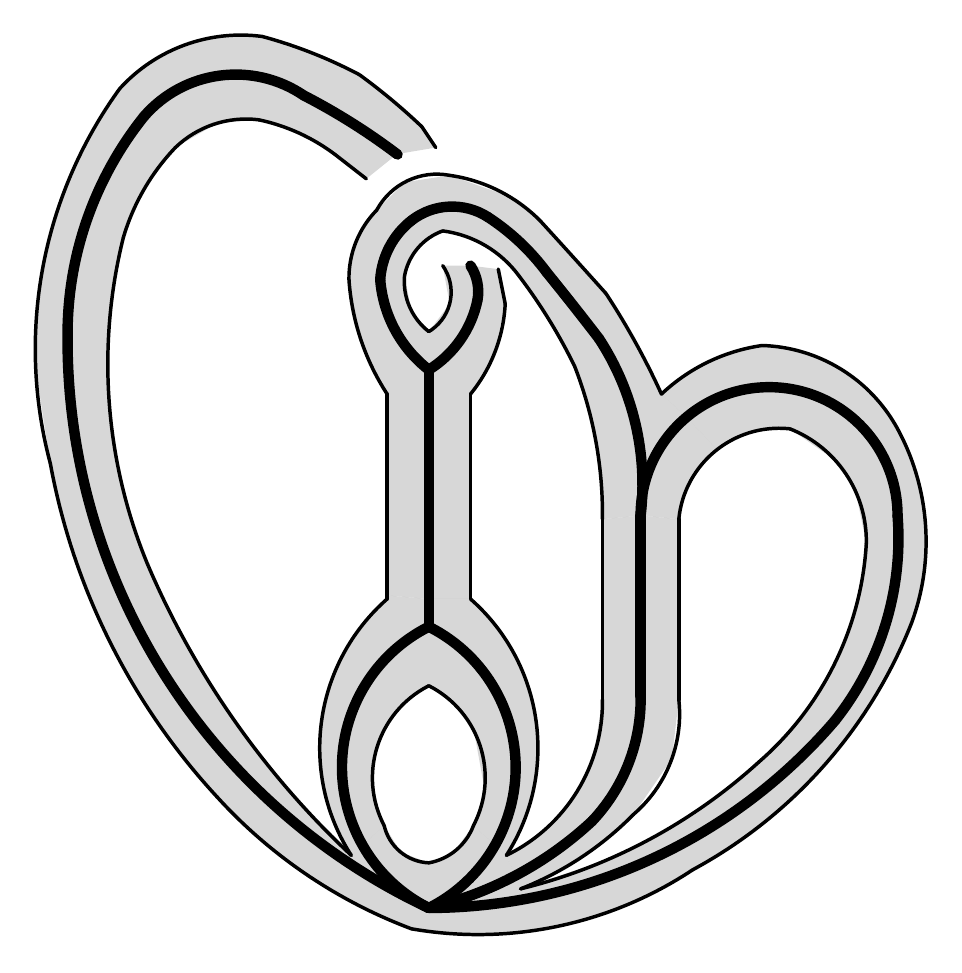}}
\caption{}
\label{fig:ribbonsplit}
\end{figure}

\begin{definition}
    The subspace of $\T_{\Sigma}$ consisting of open simplices corresponding to graphs of degree $k$ or less is the \emph{degree $k$ space} of $\Sigma$ and is denoted $\T_{\Sigma,k}$.  The analogous subspace for $\A_n$ is denoted $\A_{n,k}$. The subcomplex of $\ST_{\Sigma}$ that is spanned by $0$-simplices with underlying graphs of degree $k$ or less is the \emph{degree $k$ complex} of $\Sigma$ and is denoted $\ST_{\Sigma,k}$.  The analogous subcomplex for $\SA_n$ is denoted $\SA_{n,k}$.
\end{definition}

Since the action of $Aut(F_n)$ only alters markings, $Aut(F_n)$ preserves $\A_{n,k}$ and $\SA_{n,k}$.  Similarly, the action of $\m$ preserves $\T_{\Sigma,k}$ and $\ST_{\Sigma,k}$.  The deformation retraction of $\A_n$ to $\SA_n$ only alters the metric on each graph, it leaves the degree of the graphs invariant.  Restricting to $\A_{n,k}$ gives a deformation retraction to $\SA_{n,k}$.  Restricting the deformation retraction to $\T_{\Sigma}$ produces a deformation retraction to $\ST_{\Sigma}$ which can be restricted further to a deformation retraction from $\T_{\Sigma, k}$ to $\ST_{\Sigma, k}$.

Hatcher and Vogtmann's primary technical result in \cite{hatcher-vogtmann} is the following:

\begin{theorem}[Hatcher, Vogtmann]
Auter space contains a sequence of simplicial complexes $\SA_{n, 0} \subset \SA_{n, 1} \subset \ldots \subset \SA_{n, 2n-2} = \SA_{n}$.  The complex $\SA_{n, k}$ is $k$-dimensional, $(k-1)$-connected, and preserved under the action of $Aut(F_n)$.
\label{thm:hv}
\end{theorem}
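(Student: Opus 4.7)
The plan separates the elementary structural claims from the substantive connectivity claim. For the structural part, a half-edge count on any rank $n$ graph $(\G, \phi)$ in $\SA_n$ gives $2E = |w| + \sum_{x \neq w} |x|$, which combined with $E - V = n-1$ yields $\deg(\G) = 2n - |w|$. Since $|w| \geq 2$ in $\SA_n$, this bounds $\deg(\G) \leq 2n-2$, with equality exactly when the basepoint is divalent, so $\SA_{n,2n-2} = \SA_n$. A top-dimensional simplex of $\SA_{n,k}$ is a chain of $k+1$ marked graphs under forest collapse whose top graph has degree exactly $k$ and $n+k$ edges (achieved when all non-basepoint vertices are trivalent), giving a simplex of dimension $k$. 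Invariance under $Aut(F_n)$ is automatic since the action only relabels markings and so preserves underlying graphs.

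The content lies in the $(k-1)$-connectivity. My plan follows the standard template for filtration connectivity theorems: the top of the filtration, $\SA_n = \SA_{n,2n-2}$, is contractible (inherited from $\A_n$ via the spine retraction), and I would show inductively that each inclusion $\SA_{n,k'-1} \hookrightarrow \SA_{n,k'}$ is a $(k-1)$-equivalence. Composing these identifies $\SA_{n,k} \hookrightarrow \SA_n$ as a $(k-1)$-equivalence into a contractible space, hence $\SA_{n,k}$ is $(k-1)$-connected. Operationally, given $f : S^{k-1} \to \SA_{n,k}$, extend to $F : D^k \to \SA_n$ by contractibility, then push $F$ down the filtration layer by layer, each step a homotopy rel boundary supported on the stars of top-degree vertices.

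The technical heart is the local push-down from $\SA_{n,k'}$ to $\SA_{n,k'-1}$. For each marked graph $(\G, \phi)$ of degree $k'$, one identifies a canonical non-basepoint vertex $v$ of valence $\geq 3$ and a canonical partition of the half-edges at $v$ into two blocks. The corresponding ideal-edge expansion followed by an appropriate forest collapse (a \emph{vertex split}) slides one unit of valence from $v$ onto the basepoint $w$, producing a combinatorial path in $\SA_n$ from $(\G, \phi)$ to a graph of degree $k'-1$. The main obstacle, which is where the bulk of the work lies, is compatibility: on a face shared between two simplices, the local push-downs must agree so as to assemble into a genuine simplicial homotopy supported in the star of $(\G, \phi)$. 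The plan is to resolve this by fixing a canonical rule for choosing $v$ and the partition at $v$ that depends only on the abstract marked graph, based on the rose marking and a distance-from-basepoint ordering of half-edges, together with a strictly decreasing secondary lexicographic invariant that drives an outer induction and rules out cycling. Once this canonical push-down is in place, it assembles into a homotopy of $F$ realizing the inductive step, and iterating down the filtration closes the proof.
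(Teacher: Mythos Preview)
Your structural claims (the degree bound $\deg(\G)=2n-|w|\le 2n-2$, the dimension count, and $Aut(F_n)$-invariance) are fine and match the paper's treatment.

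For connectivity, however, your plan diverges sharply from Hatcher--Vogtmann's actual argument as summarized and elaborated in the paper, and the divergence is located exactly where your proposal is weakest. Hatcher--Vogtmann do \emph{not} work in the spine $\SA_n$; they work in $\A_n$ with \emph{metric} graphs. They take a PL map $f\colon D^k\to\A_n$ and homotope it into $\A_{n,k}$ using Morse-theoretic tools on each metric graph: the height function (distance to basepoint), its critical points, ``canonical splitting'' (collapsing certain downward edges), and ``sliding in $\varepsilon$-cones'' (perturbing attaching points of branches). Coherence across the parameter disk is achieved by first perturbing $f$ into general position with respect to a codimension stratification $\mathcal S_0\supseteq\mathcal S_1\supseteq\cdots$ of $\A_n$, so that $f^{-1}(\mathcal S_i)$ has codimension $\ge i$, and then running an induction on a complexity count $c_s$ over connected components of the pulled-back strata. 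Only after landing in $\A_{n,k}$ do they retract to $\SA_{n,k}$.

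Your approach instead attempts a purely combinatorial layer-by-layer push-down inside $\SA_n$ via a ``canonical'' vertex split. The gap is exactly the coherence problem you flag but do not resolve: your proposed rule invokes a ``distance-from-basepoint ordering of half-edges,'' but vertices of $\SA_n$ are marked graphs \emph{without} metrics, so there is no such distance available; and a choice depending only on the marking has no obvious reason to be compatible under forest collapses between adjacent vertices of the spine. The metric on graphs in $\A_n$ is precisely the extra structure Hatcher--Vogtmann exploit to sidestep this difficulty, replacing the impossible global combinatorial choice with continuous perturbations and transversality. Absent an alternative mechanism (for instance, a verified high connectivity of the descending links in the degree filtration), your inductive step is not established.
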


Hatcher and Vogtmann proved Theorem \ref{thm:hv} by homotoping any piecewise linear map $f \colon D^k \to \A_n$ into $\A_{n,k}$ via a homotopy that monotonically reduces degree which shows that $(\A_n, \A_{n,k})$ is $k$-connected.  Since $\A_n$ is contractible, this proves that $\A_{n,k}$ is $(k-1)$-connected and that the same is true for $\SA_{n,k}$ because $\A_{n,k}$ deformation retracts to $\SA_{n,k}$.  

We will prove the identical result for $\ST_{\Sigma}$ by showing that Hacther and Vogtmann's homotopy can be chosen to preserve $\T_{\Sigma}$.  That is, any piecewise linear map $f: D^k \to \T_{\Sigma}$ can be homotoped into $\T_{\Sigma,k}$ via a homotopy that monotonically reduces degree.  This is sufficient to show that $\T_{\Sigma,k}$ is $(k-1)$-connected because $\T_{\Sigma}$ is contractible.  Recall that the main result of this work is:

\begin{maintheorem}
The space of basepointed ribbon graphs $\T_{\Sigma}$ contains simplicial complexes $\ST_{\Sigma, 0} \subset \ST_{\Sigma, 1} \subset \ldots \subset \ST_{\Sigma, 4g+2p-4} = \ST_{\Sigma}$.  The complex $\ST_{\Sigma, k}$ is $k$-dimensional, $(k-1)$-connected, and preserved under the action of $\m$.
\end{maintheorem}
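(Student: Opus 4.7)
The plan is to adapt Hatcher and Vogtmann's proof of Theorem \ref{thm:hv} to the ribbon graph setting, essentially running their homotopy and checking that it respects the ribbon structure at every stage. I will address the three conclusions of Theorem \ref{thm:main} in order of increasing difficulty.

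Invariance of $\ST_{\Sigma, k}$ under $\m$ is immediate: the embedding $\m \hookrightarrow Aut(F_{2g+p-1})$ makes $\m$ act on $\T_\Sigma$ by changing only the markings, leaving underlying graphs, ribbon structures, and therefore degrees untouched. The $k$-dimensionality of $\ST_{\Sigma, k}$ is inherited from its containment in $\SA_{2g+p-1, k}$, which is $k$-dimensional by Theorem \ref{thm:hv}; the identity $|w| + \deg(\G) = 4g+2p-2$, which follows from Euler characteristic together with the handshake lemma applied to a graph drawable in $\Sigma$, explains the upper index $4g+2p-4$.

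The main content is $(k-1)$-connectedness of $\ST_{\Sigma, k}$. The deformation retraction of $\A_{2g+p-1}$ onto $\SA_{2g+p-1}$ preserves degree and restricts to one from $\T_\Sigma$ onto $\ST_\Sigma$, and likewise on the degree-$k$ pieces, so it is enough to prove that $\T_{\Sigma, k}$ is $(k-1)$-connected. Because $\T_\Sigma$ is contractible, this in turn reduces to showing that the pair $(\T_\Sigma, \T_{\Sigma, k})$ is $k$-connected. Thus, given a PL map $f \colon D^k \to \T_\Sigma$, I will construct a homotopy, fixed on $f^{-1}(\T_{\Sigma,k})$, pushing $f$ into $\T_{\Sigma, k}$.

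The homotopy is the one built in Section 3 of \cite{hatcher-vogtmann}: on a top-dimensional open simplex of the image whose corresponding graph has maximal degree exceeding $k$, Hatcher and Vogtmann identify a canonical vertex together with a splitting of its incident half-edges, perform a simultaneous blow-up across the simplex, and demonstrate strict decrease in an explicit lexicographic complexity. Iteration drives the image into $\T_{\Sigma, k}$. The main obstacle, and the essential new content, is verifying that when the input graph carries a ribbon structure $O$ the vertex splittings selected by the Hatcher-Vogtmann algorithm can always be chosen to be \emph{allowed expansions}, i.e., partitions of $O(v)$ into two cyclically successive blocks of half-edges. Only allowed expansions produce a graph that remains drawable in $(\Sigma, \psi)$, so this compatibility is exactly what is needed to keep the homotopy inside $\T_\Sigma$. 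Once it is established, their complexity-reduction argument applies unchanged and produces a homotopy into $\T_{\Sigma, k}$ entirely within $\T_\Sigma$, completing the proof.
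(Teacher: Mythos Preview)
Your overall strategy matches the paper's: reduce to showing $(\T_\Sigma,\T_{\Sigma,k})$ is $k$-connected and run the Hatcher--Vogtmann homotopy while checking it stays inside $\T_\Sigma$. But there are two real problems.

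First, your description of the Hatcher--Vogtmann homotopy is not the one actually used. The argument in \cite{hatcher-vogtmann} (Section~4, not Section~3) does not proceed by locating ``a canonical vertex'' and doing a single blow-up with a lexicographic complexity. It works with the height function on each metric graph, filters $D^k$ by the codimension of critical points after a general-position perturbation, and then alternates two moves: \emph{canonical splitting} (a forest collapse, which always preserves $\T_\Sigma$) and \emph{sliding in $\varepsilon$-cones} (perturbing the attaching points of extended branches off of critical vertices). The induction is on a complexity $c_s$ counting downward paths between critical points, and the reason the degree eventually drops to $\leq k$ is a dimension count comparing the number $e_s$ of extended branches to $\dim D^k$.

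Second, and more importantly, you have correctly identified the obstacle but then simply asserted it away. The paper's Figure~\ref{fig:slidingerror} gives an explicit example in which the Hatcher--Vogtmann sliding move, applied naively, produces a ribbon graph whose ribbon surface is \emph{not} homeomorphic to $\Sigma$; so it is not merely a matter of ``checking'' that their choices happen to be allowed expansions. The substance of the paper (Subsection~4.4) is the construction of \emph{attaching spaces}: for each critical vertex one restricts the product of attaching intervals $(-\varepsilon,\varepsilon)^r$ to the chamber cut out by the inequalities $y_1\le y_2\le\cdots\le y_r$ dictated by the cyclic order, takes the product over all attaching sets and all critical vertices to get a polyhedron $A$, and observes that perturbations of the attaching map $\alpha:N\to A$ inside $A$ correspond exactly to allowed expansions. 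The key point is that $\dim A=e_s$, so the general-position argument that makes $\alpha(N)$ miss the origin still goes through with the same dimension count. Without this construction and dimension verification, your last paragraph is not a proof but a statement of what remains to be proved.

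A minor point: containment in $\SA_{2g+p-1,k}$ gives only $\dim \ST_{\Sigma,k}\le k$; equality needs the observation that every underlying graph in $\SA_{2g+p-1,k}$ admits a ribbon structure drawing it in $\Sigma$.
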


Most of the statements made in Theorem \ref{thm:main} are straightforward. By definition $\ST_{\Sigma,k} \subseteq \ST_{\Sigma,k+1}$.  Further, $dim(\ST_{\Sigma,k}) = dim(\SA_{2g+p-1,k})$ because any graph $\Gamma$ representing a simplex in $\SA_{2g+p-1,k}$ can be the underlying graph for a ribbon graph that can be drawn in $\Sigma$.  Then Theorem \ref{thm:hv} shows that $\dim(\ST_{\Sigma, k}) = k$ and that $\ST_{\Sigma, 4g+2p-4} = \ST_{\Sigma}$.  We will spend the remainder of this work proving that $\ST_{\Sigma, k}$ is $(k-1)$-connected.

\subsection{Homotopies of Graphs}

The homotopy that we will describe is most simply thought of as a sequence of homotopies on metric graphs.  There are two primary types of graph homotopies that will be used in our proof of Theorem \ref{thm:main}.  They are presented in Subsections 3.2.1 and 3.2.2.

We take inspiration from Morse Theory in defining two key concepts that play important roles in our homotopy: height functions and critical points.

\begin{definition}
For a basepointed metric graph $\Gamma$, the \emph{height function} $h \colon \Gamma \to \mathbb{R}$ gives the distance to the basepoint.
\end{definition}

Taking $h$ to be a literal measurement of height, we can consider downward paths emanating from a point in $\Gamma$.  A point in $\Gamma$ is a \emph{critical point} if there is more than one downward path from the point.  This connects to Morse Theory for manifolds in that the topological type of $h^{-1}([0,r])$ only changes when $r$ passes through a critical value.  This idea is illustrated in Figure \ref{fig:cansplit}.  For each of the graphs, take the bottommost point to be the basepoint.  Instead of using arc length as the distance from each point to the basepoint, consider the height of the point as this distance.  Note that each of the graphs in Figure \ref{fig:cansplit} has four critical points, illustrated by the white dots.  For the remainder of this work, all figures of graphs and ribbon graphs have metrics given by the height function and, when relevant, white dots marking critical points.

\begin{figure}
\subfloat[]{\fbox{\includegraphics[scale = .3]{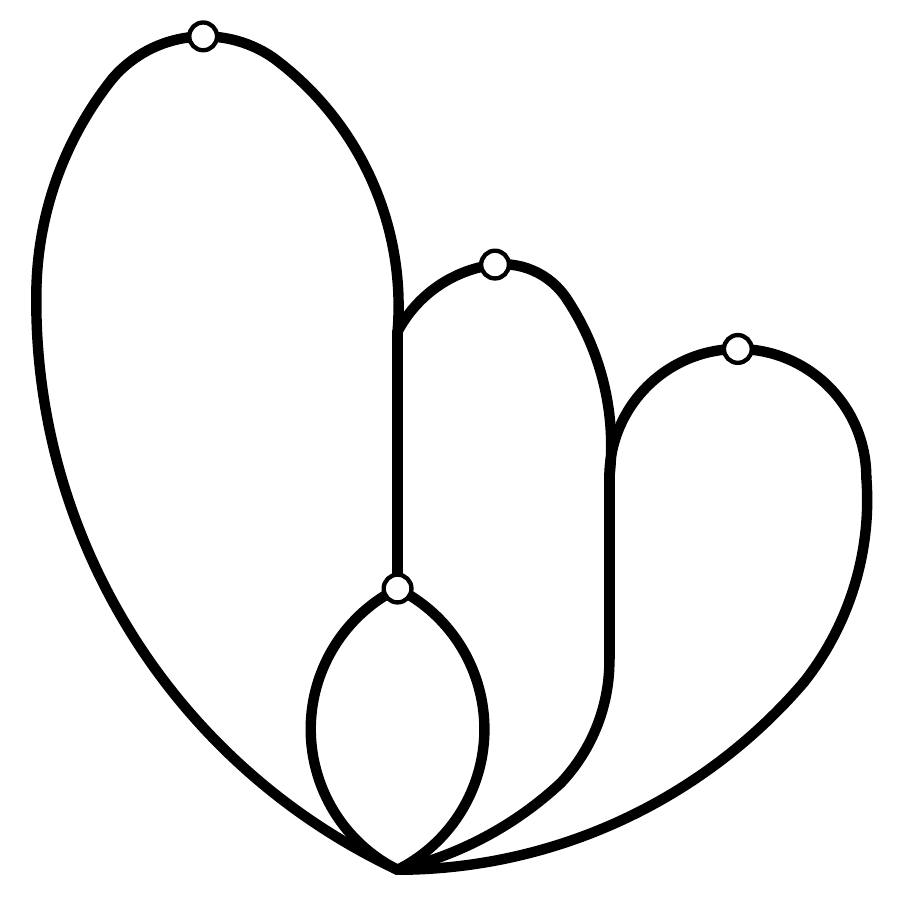}}}
\qquad
\qquad
\subfloat[]{\fbox{\includegraphics[scale = .3]{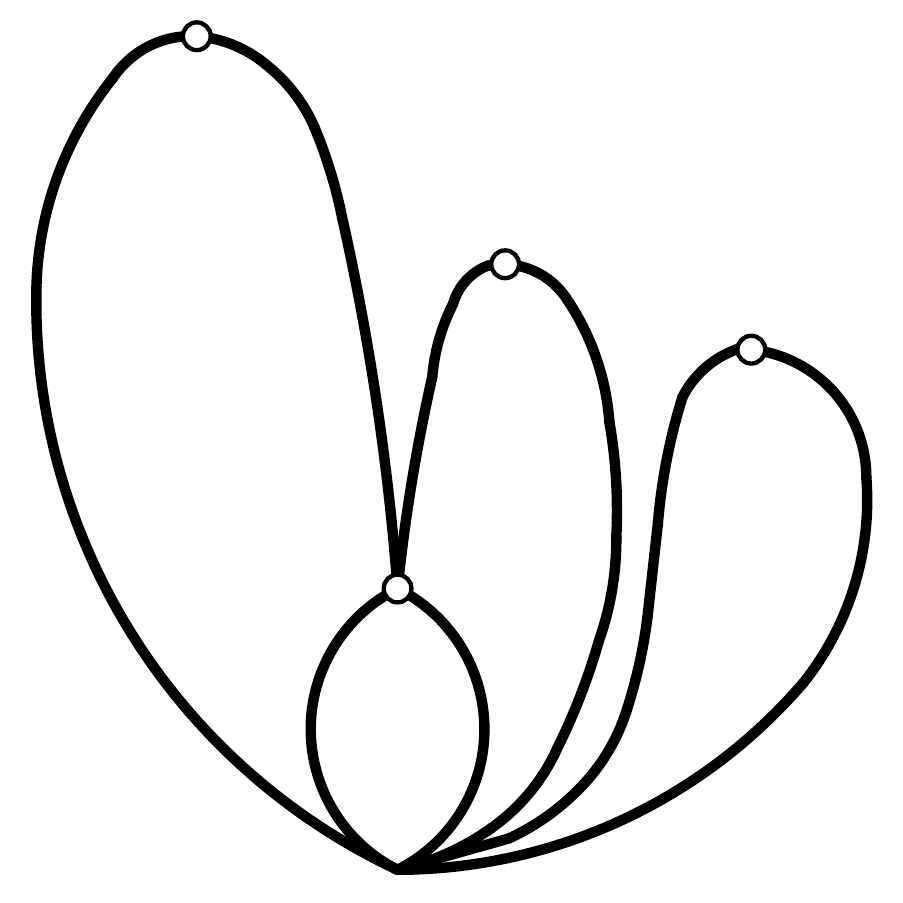}}}
\caption{}
\label{fig:cansplit}
\end{figure}

\subsubsection{Canonical Splitting}

Let $x$ be a critical point.  If $x$ is a vertex the \emph{cone of $x$}, denoted $C_x$, is the union of $x$ and the open downward edges leaving $x$.  If $x$ is not a vertex, $C_x$ is the open edge containing $x$.  A \emph{branch} of a critical point vertex $x$ is the union of $\{ x \}$ and a downward edge from $x$; the cone of $x$ is made up of these branches.  An \emph{extended branch} is a downward path from $x$ that ends at a critical point.

The primary difference between branches and extended branches is that branches only intersect at critical points while two or more extended branches can intersect on edges.  This happens when there are multiple upward edges and only one downward edge leaving a vertex.  The process of contracting, or splitting, all such downward edges is called \emph{canonical splitting}.  This process is canonical; we can start at the top of the graph, working downward, contracting each such edge.  These contractions have the effect of splitting extended branches down to the next critical point, or perhaps all of the way down to the basepoint.  Canonical splitting cannot increase the degree of a graph and decreases degree each time an edge collapse causes an extended branch to reach the basepoint.  An example of canonical splitting is shown in Figure \ref{fig:cansplit}.  The graph in Frame (B) is the result of performing canonical splitting on the graph in Frame (A).  Note that canonical splitting is a forest collapse and so preserves $\T_{\Sigma}$.

\subsubsection{Sliding in $\varepsilon$ cones}

After canonical splitting each branch ends at a critical point.  We can perturb the attaching point of each branch ending at a critical point vertex $x$ downward into the $\varepsilon$ cone $C^{\varepsilon}_x$, the intersection of $C_x$ and the $\varepsilon$ neighborhood of $x$.  We call the homotopy performing such perturbations at each critical point \emph{sliding in $\varepsilon$ cones}.  This process consists of putting the attaching points of extended branches into `general position' relative to the critical point.  Sliding in $\varepsilon$ cones does not alter degree, however following this homotopy with canonical splitting puts the end of each branch at the basepoint, reducing degree.

The first deviation from the work of Hatcher and Vogtmann in \cite{hatcher-vogtmann} occurs in how we handle sliding in $\varepsilon$ cones.  Some care must be taken to insure that this homotopy preserves $\T_{\Sigma}$, we address this issue in Subsection 4.4.  Figure \ref{fig:slidingerror} shows possible results of sliding in $\varepsilon$ cones followed by canonical splitting on ribbon graph (A), whose ribbon surface is a torus with 3 punctures.  The two edges that will be affected by these alterations are labeled $e_1$ and $e_2$.  Specifically, sliding in $\varepsilon$ cones moves these edges off of the central vertex, to either the left or the right.  Graphs (B), (C), (D) and (E) are the results of applying canonical splitting after choosing these directions.  That is, graph (B) corresponds to moving $e_1$ to the right and $e_2$ to the left.  Graph (C) occurs when canonical splitting is applied after moving both edges to the left, while graph (D) is result when both edges are moved to the right.  Lastly, moving $e_1$ to the left and $e_2$ to the right yields graph (E).  However, the directions in which $e_1$ and $e_2$ can be shifted are not independent; while ribbon graphs (B), (C), (D) and (E) are all possible results of applying these homotopies on graph (A), only graphs (B), (C) and (D) have ribbon surfaces homeomorphic to a torus with 3 punctures.  Ribbon graph (E) has a ribbon surface homeomorphic to a sphere with 5 punctures.

\begin{figure}
\begin{picture}(330,175)
\put(0,95){\subfloat[]{\fbox{\includegraphics[scale = .203]{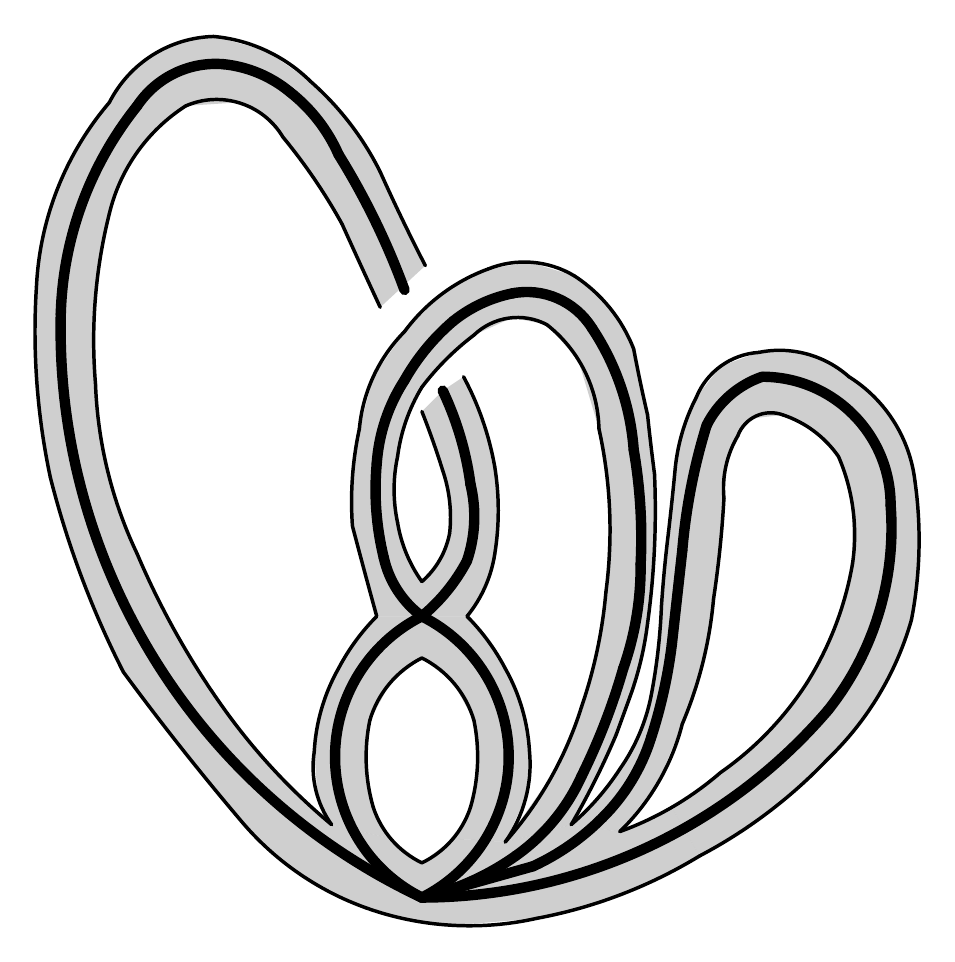}}}}
\put(97,95){\subfloat[]{\fbox{\includegraphics[scale = .25]{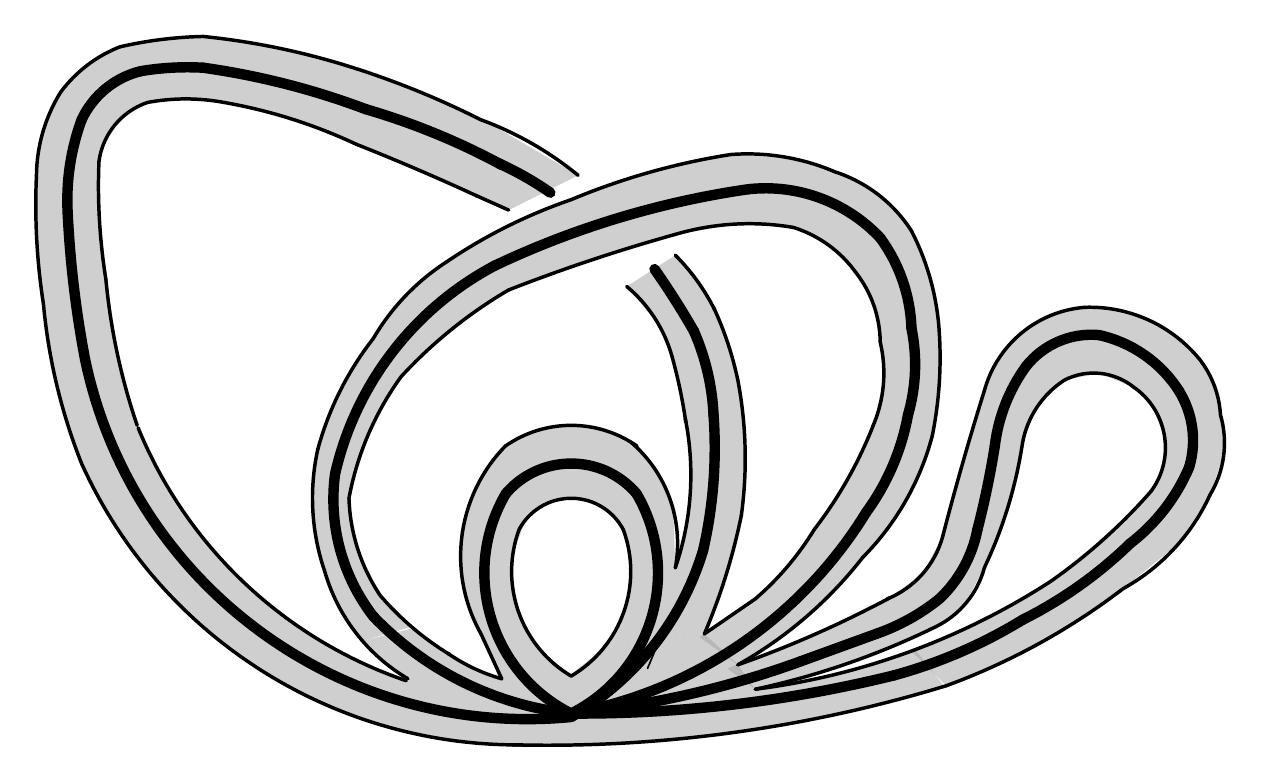}}}}
\put(220,95){\subfloat[]{\fbox{\includegraphics[scale = .249]{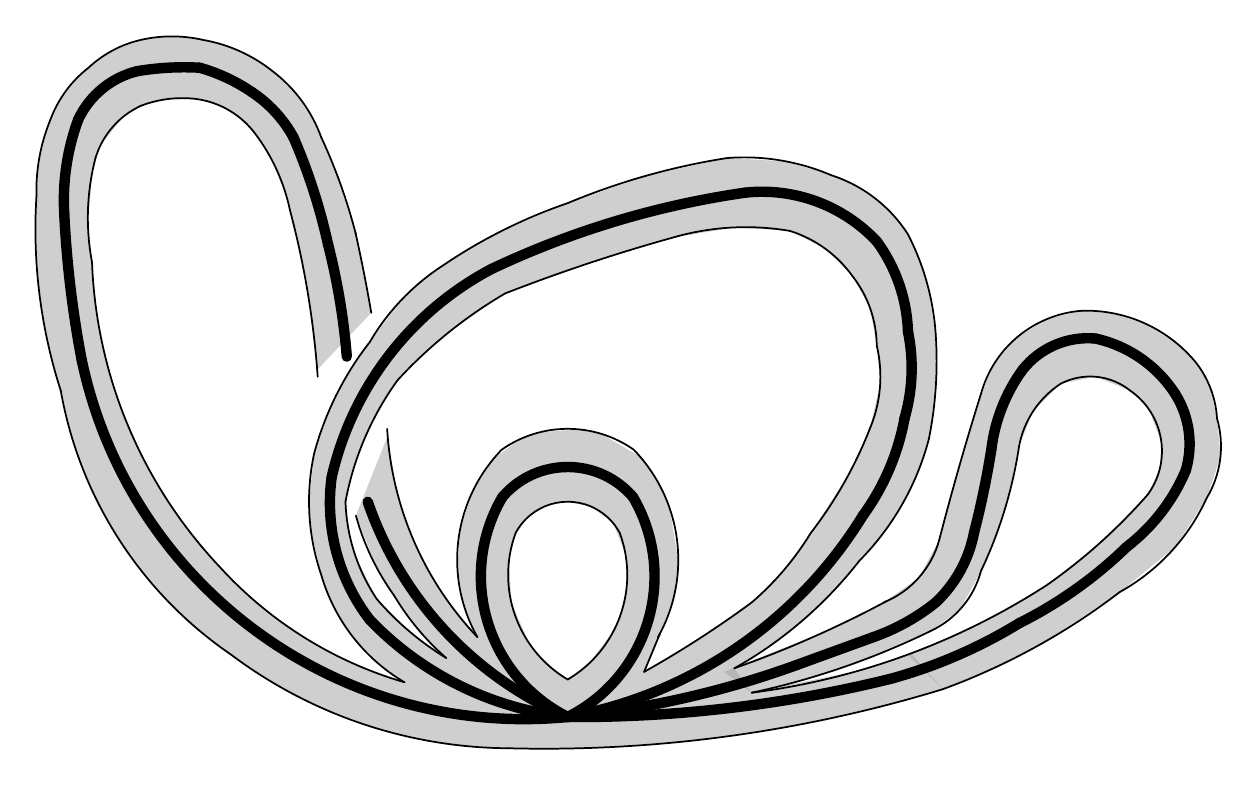}}}}
\put(37,10){\subfloat[]{\fbox{\includegraphics[scale = .25]{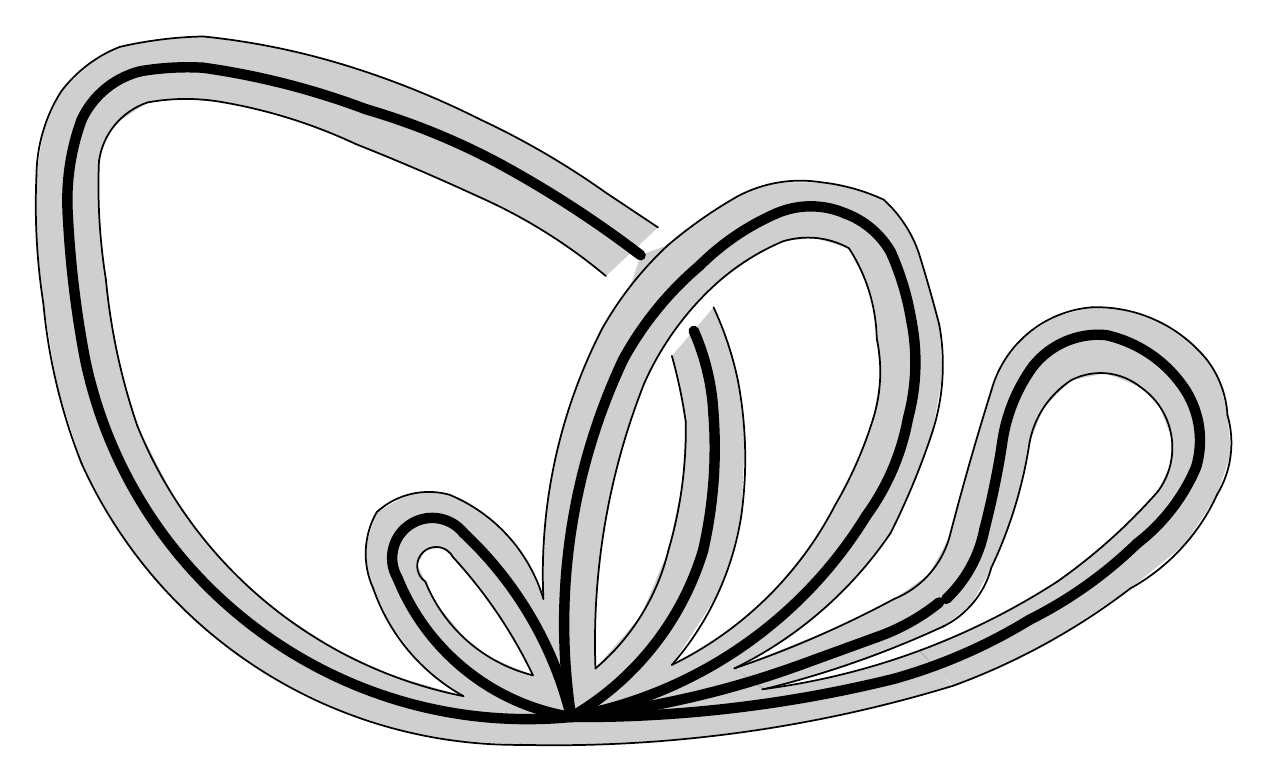}}}}
\put(183,10){\subfloat[]{\fbox{\includegraphics[scale = .249]{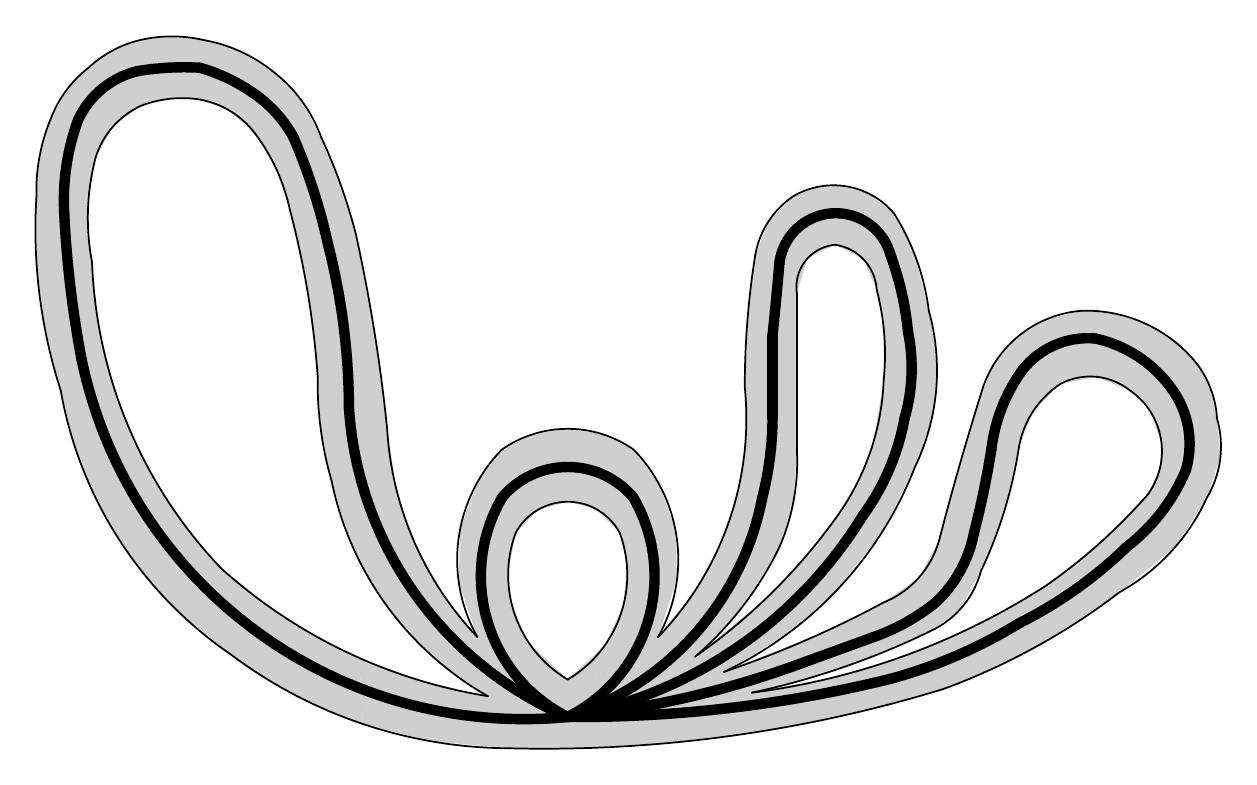}}}}
\put(10,98){$e_1$}
\put(35,139){$e_2$}
\end{picture}
\caption{}
\label{fig:slidingerror}
\end{figure}

\section{\bf Homotopy}

In this section, we discuss the homotopy described by Hatcher and Vogtmann in Section 4 of \cite{hatcher-vogtmann}.   Both canonical splitting and sliding in $\varepsilon$ cones play a significant role in this homotopy.  While each of these are easy to apply to any particular graph, applying them to a parameterized collection of graphs requires consistent $\varepsilon$ cones within the collection.  So, before applying these techniques we perturb the critical points, simplifying their $\varepsilon$ cones.  This perturbation is described in Subsection 4.1.  In Subsection 4.2 we describe the homotopy as an inductive process reducing the complexity of graphs.  Subsection 4.3 details the use of canonical splitting in the homotopy, specializing the discussion in Section 4.5 of \cite{hatcher-vogtmann} to demonstrate that canonical splitting preserves $\T_{\Sigma}$.  The primary deviation from the work of Hatcher and Vogtmann occurs in Subsection 4.4 in which we apply sliding in $\varepsilon$ cones.  Specifically, Hatcher and Vogtmann's homotopy need not preserve $\T_{\Sigma}$ and some extra work is needed to insure that our homotopy does.

\subsection{Perturbation to General Position}

We aim to perturb graphs to simplify critical points by reducing the number of edges in their $\varepsilon$ cones:

\begin{definition}
The \emph{codimension of a critical point} $x$ is defined to be 0 if $x$ is in the interior of an edge and one less than the number of downward edges from $x$ if $x$ is a vertex.  The \emph{codimension of a ribbon graph} is the sum of the codimensions of its critical points.
\end{definition}

Recall that each $i$ simplex in $ \T_{\Sigma}$ can be constructed by varying the edge lengths of its corresponding ribbon graph $\Gamma$.  That is, the $i$ simplex can be thought of as the subspace $\mathbb{R}^{i+1}$, where the coordinates correspond to the lengths of the $i+1$ edges of $\Gamma$, in which each coordinate is positive and the sum of the coordinates is 1.  The metric ribbon graphs of codimension $j$ define a codimension $j$ linear subspace of the open simplex.  To see this, suppose critical point $x$ is incident to $b$ downward edges and hence has codimension $b-1$.  These $b$ edges are the initial edges of $b$ distinct downward paths from $x$ to the basepoint.  Since each of these paths must have the same length, they define $b-1$ linear equations of the edge lengths.  Each of the paths contains at least one edge not contained by any other path, making these equations linearly independent, and giving a solution set with codimension $b-1$.

To begin our homotopy, we will refer to our initial map as $f \colon D^k \to~\T_{\Sigma}$.  Recall that $f$ is piecewise linear and our goal is to homotope $f$ into $\T_{\Sigma,k}$ via a homotopy that monotonically reduces degree.  Our first step is to perturb this map, putting the $\varepsilon$ cones of graphs in the range of the map into the simplest form possible by reducing their codimension. Let $\mathcal{S}_i \subseteq~\T_{\Sigma}$ be the set of points of codimension at least $i$.  This gives a filtration $\T_{\Sigma}~=~\mathcal{S}_0~\supseteq~\mathcal{S}_1~\supseteq~\mathcal{S}_2~\ldots $.  Our goal in this initial stage of the homotopy is to reduce the dimension of the pre-image of $\mathcal{S}_i$ as this simplifies the $\varepsilon$ cones of graphs in the image of our map.

\begin{proposition}
The map $f \colon D^k \to \T_{\Sigma}$ can be homotoped preserving degree to a piecewise linear map so that for all $i$  the codimension of $f^{-1}(\mathcal{S}_i)$ is at least $i$ in $D^k$.
\label{prop:codim}
\end{proposition}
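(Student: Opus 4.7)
The plan is to apply standard PL general position, exploiting the fact that the degree of a ribbon graph in $\T_{\Sigma}$ depends only on its underlying combinatorial graph, not on the metric. In particular, any homotopy of $f$ that moves each point within its containing open simplex of $\T_{\Sigma}$ automatically preserves degree, which gives us room to perturb.

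First I would unpack the structure of $\mathcal{S}_i$ inside a single open simplex $\Delta$. As sketched in the paragraph preceding the proposition, once one fixes a combinatorial pattern of coincidences of downward-path lengths at the vertices of the ribbon graph corresponding to $\Delta$, that pattern is cut out by a system of linear equations among the edge lengths, and the number of independent equations equals the corresponding contribution to codimension. Since there are only finitely many such patterns, $\mathcal{S}_i \cap \Delta$ is a finite union of affine subspaces of $\Delta$, each of codimension at least $i$. Compactness of $D^k$ and piecewise linearity of $f$ ensure that $f(D^k)$ meets only finitely many simplices of $\T_{\Sigma}$, so altogether we are dealing with a single finite list $L_1, \ldots, L_N$ of affine subspaces with prescribed target codimensions $j_1, \ldots, j_N$.

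Having set this up, standard PL transversality produces an arbitrarily small PL homotopy of $f$ after which $\codim f^{-1}(L_\alpha) \geq j_\alpha$ for every $\alpha$, and taking unions yields $\codim f^{-1}(\mathcal{S}_i) \geq i$ for every $i$. The main obstacle is globalizing the perturbation without pushing points across simplex boundaries of $\T_{\Sigma}$, where the underlying graph changes and degree can drop. I would deal with this by first choosing a PL subdivision of $D^k$ fine enough that every closed cell maps into the closure of a single simplex of $\T_{\Sigma}$, and then inducting on the dimension of the target simplices, beginning with the lowest. At each stage the perturbation on a simplex of dimension $d+1$ can be chosen to vanish in a neighborhood of the preimages of the lower-dimensional strata already treated, so the homotopy never moves any point across a face and degree is preserved throughout. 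Iterating over the finite list of simplices meeting $f(D^k)$ delivers the desired map.
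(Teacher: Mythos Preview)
Your proposal is correct and rests on the same key observation the paper uses: the perturbation moves each point only within its open simplex of $\T_{\Sigma}$ (i.e.\ alters edge lengths but never creates or collapses edges), so it automatically preserves both the subspace $\T_{\Sigma}\subset\A_{2g+p-1}$ and the degree. The paper's proof simply invokes Hatcher and Vogtmann's Lemma~4.4 for $\A_{2g+p-1}$ and notes this observation, whereas you effectively reprove that lemma by hand---analyzing $\mathcal{S}_i\cap\Delta$ as a finite union of affine subspaces and running a stratified PL transversality argument inductively over the target simplices. Your route is more self-contained; the paper's is shorter by outsourcing the transversality to \cite{hatcher-vogtmann}, but the content is the same.
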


\begin{proof}
Hatcher and Vogtmann showed this for maps $f \colon D^k \to \A_{2g+p-1}$ in Lemma 4.4 of \cite{hatcher-vogtmann}.  Their perturbation does not create nor collapse edges, only alters the lengths of those edges.  Hence, the homotopy preserves simplices and therefore preserves $\T_{\Sigma}$, giving the result of this proposition.
\end{proof}

Proposition \ref{prop:codim} relegates the pre-images of graphs with more complicated $\varepsilon$ cones to the largest codimension achievable by perturbation.  Both canonical splitting and sliding in $\varepsilon$ cones leaving the $\varepsilon$ cones unaltered; the remainder of our homotopy will leave $\varepsilon$ cones fixed, and hence will not disturb the general position established in Proposition \ref{prop:codim}.

\subsection{Idea of Homotopy: Inductive Complexity Reduction}

We will apply canonical splitting and sliding in $\varepsilon$-cones as described in Section 3 to reduce the degree of the points in the image of $f$.  In order to apply these techniques, we take a piecewise approach, homotoping pieces of $D^k$ whose images have consistent $\varepsilon$ cones.  Let $\mathscr{S}_i = f^{-1}(\mathcal{S}_{k-i})$.  Hatcher and Vogtmann showed in Sections 4.2 and 4.3 of \cite{hatcher-vogtmann} that the filtration of $\A_{n}$ analogous to our sets $\mathcal{S}_i$ pulls back to a filtration of $D^k$ by subpolyhedra, and their proof holds in our setting.  Specifically, $D^k = \mathscr{S}_k \supseteq \mathscr{S}_{k-1} \supseteq \mathscr{S}_{k-2} \supseteq \ldots$ is a filtration by subpolyhedra and $\dim(\mathscr{S}_i) \leq i$ by Proposition \ref{prop:codim}.  Most importantly, consider $S$, a connected component of $\mathscr{S}_i - \mathscr{S}_{i-1}$.  Observe that $\varepsilon$ cones of $f(s)$ are constant as $s$ varies in $S$.  To make $S$ a compact polyhedron, delete from $S$ points in a neighborhood of $\mathscr{S}_{i-1}$.

For simplicity, we will relabel $f \colon D^k \to \T_{\Sigma}$ as $f(s) = \G_s$.  This is an abuse of notation as elements of $\T_{\Sigma}$ are marked basepointed graphs.  However, it is straightforward to alter the basepoint and marking throughout the homotopy, so our notation is sufficient.  Since $\G_s \in \T_{\Sigma}$, recall that there is a unique ribbon structure $O_s$ that draws $\G_s$ in $\Sigma$.  Demonstrating that our homotopy preserves $\T_{\Sigma}$ is a matter of showing that we can define $O_{st}$ that draws $\G_{st}$ in $\Sigma$ for all $s$ and $t$.

We will proceed by induction on $i$, homotoping $\Gamma_s$ to degree at most $k$ for all $s$ in a neighborhood of $\mathscr{S}_i$.  Our induction step will decrease the \emph{complexity} $c_s$ of $\Gamma_s$, the number of downward paths in $\Gamma_s$ that begin and end at (and possibly contain) critical points.  We will also be interested in the number $e_s$ of such paths that are extended branches, i.e. that contain no critical points in their interior.  Specifically, if there exists $s \in S$ with $e_s > i$, we will describe a homotopy supported on a neighborhood of $S$ that (1) does not increase degree nor alter the $\varepsilon$ cones of $\Gamma_s$ and (2) lowers the maximum complexity $c_s$ over $S$.  This process guarantees that we can homotope $\Gamma_s$ to achieve $e_s \leq i$ for all $s \in S$ as complexity can only be reduced finitely many times.  Then, canonical splitting leaves $\Gamma_s$ with degree at most $k$, completing the induction step.  While we will describe the homotopy for $S$, a connected component of $\mathscr{S}_i - \mathscr{S}_{i-1}$, the same procedure reduces the degree of $\G_s$ for $s$ in a neighborhood of $\mathscr{S}_j$ where $j$ is minimal.  Hence, we will have also described our base case.

\subsection{Preparatory Canonical Splitting}

In order to perform canonical splitting on $\Gamma_s$ both the $\varepsilon$ cones and the set of extended branches to be split must remain constant as $s$ varies.  That is, complexity must be constant.  Let $K$ be a connected component of the set of $s \in S$ with $c_s$ maximal.  The set $K$ is a closed subpolyhedron of $S$ since an extended branch must move off of a critical point in order to reduce complexity.  By definition, we can perform canonical splitting on $K$, giving a homotopy $\Gamma_{st}$ for all $s \in K$.  Hatcher and Vogtmann extend this homotopy to a neighborhood of $K$ in Section 4.5 of \cite{hatcher-vogtmann}.  Their homotopy meets our needs; below we briefly describe the extension and show that it preserves $\T_{\Sigma}$.  In the intersection of the neighborhood of $K$ with $S$, graphs $\Gamma_s$ may have had the attaching points of some extended branches move off of critical points into $\varepsilon$ cones, lowering complexity.  The homotopy is extended to these graphs by moving the endpoints of the extended branches into the $\varepsilon$ cones as $s$ varies, and leaving the attaching points of those branches fixed as $t$ varies.  For $s$ in a neighborhood of $K$ but outside of $S$ and let $s' \in K$ be a nearby point.  Critical points of $\Gamma_{s'}$ can bifurcate forming several critical points in $\G_s$.  Hatcher and Vogtmann constructed $H_x$ the convex hull of these critical points in $\G_s$ and $H^{\varepsilon}_x$ the union of $H_x$ and its downward $\varepsilon$ neighborhood.  The homotopy extends to $s$ by moving the attaching points of extended branches within $H^{\varepsilon}_x$ and bifurcating the critical points as $s$ varies, and not splitting any branches that attach within $H^{\varepsilon}_x$ as $t$ varies.  We damp down this homotopy to leave the splitting supported only on a neighborhood of $K$, thus creating a homotopy $\Gamma_{st}$ defined over all $D^k$.

\begin{figure}
\begin{picture}(205,90)
\put(0,10){\subfloat[]{\fbox{\includegraphics[scale = .4]{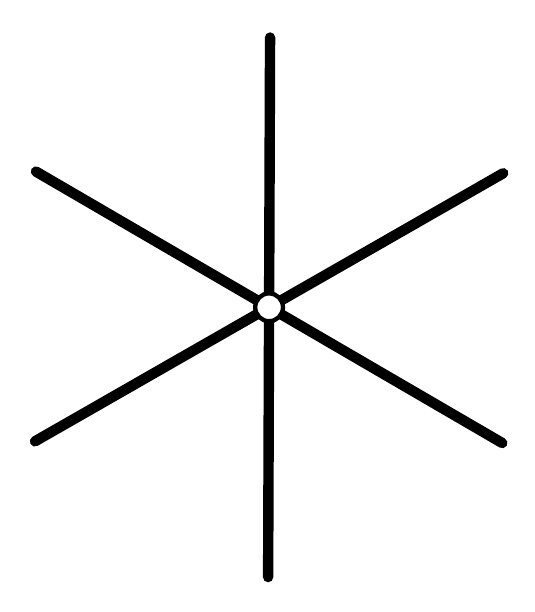}}}}
\put(110,10){\subfloat[]{\fbox{\includegraphics[scale = .354]{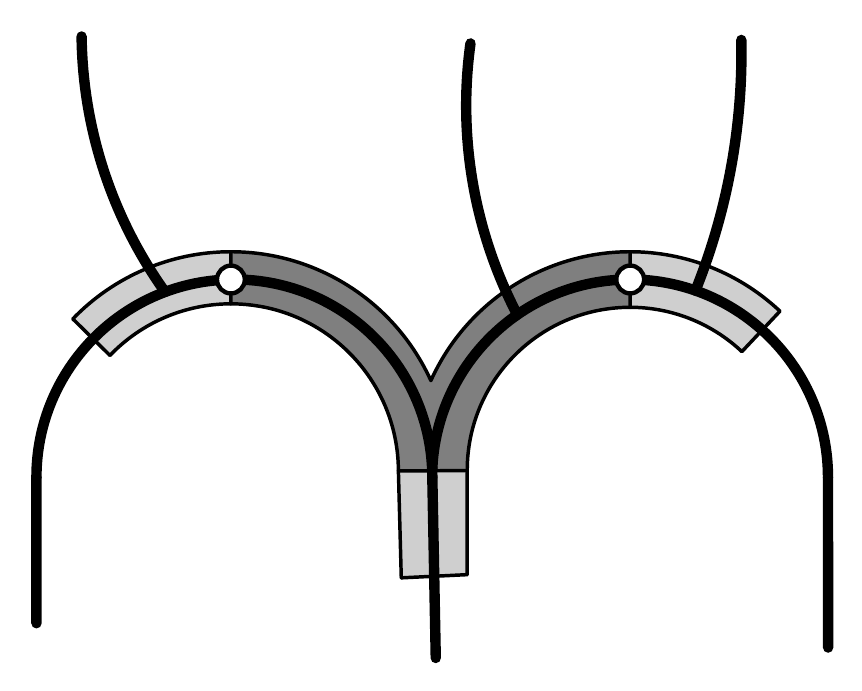}}}}
\put(2,65){1}
\put(2,20){2}
\put(38,10){3}
\put(38,75){4}
\put(60,65){5}
\put(60,20){6}
\put(114,73){1}
\put(119,10){2}
\put(161,10){3}
\put(154,73){4}
\put(193,73){5}
\put(191,10){6}
\end{picture}
\caption{}
\label{fig:attach}
\end{figure}

Figure \ref{fig:attach} shows a possible bifurcation.  Frame (A) shows a critical point vertex $x$ that is incident to six half edges in $\G_{s'}$ with $s' \in K$.  The numbering gives the cyclic order $O_{s'}(x)$.  Frame (B) shows that same section of $\G_s$ for $s$ in a neighborhood of $K$ but outside of $S$.  The dots are the critical points in the convex hull $H_x$, the dark shaded region.  The union of $H_x$ and the region with lighter shading is $H_x^{\varepsilon}$.  

Since canonical splitting is a forest collapse, it preserves $\T_{\Sigma}$, so for all $s \in K$ we have that $\Gamma_{st}$ is a point in $\T_{\Sigma}$.  For $s$ in a neighborhood of $K$, both inside and outside of $S$, note that moving the attaching points of extended branches off of a critical point $x$ and into its $\varepsilon$ cone is an edge expansion.  Additionally, bifurcating critical points requires edge expansions.  In order for $\Gamma_s \in \T_{\Sigma}$, these edge expansions must respect the cyclic order at $x$ as described in Section 2.2.  For example in Figure \ref{fig:attach}, the numbering in Frame (B) gives the correspondence between half edges in Frame (A) and those leaving $H_x^{\varepsilon}$ in Frame (B).  Moreover, this numbering demonstrates that only allowed expansions were used in bifurcating $x$ and induces a ribbon structure $O_s$ that draws $\G_s$ in $\Sigma$.  We can choose $\varepsilon$ small enough so that none of the branches that are being split attach in $C^{\varepsilon}_x$ or $H^{\varepsilon}_x$ in the cases that $s \in S$ and $s \not \in S$ respectively.  No split branch passes a fixed branch during the homotopy and $\Gamma_{st} \in \T_{\Sigma}$ for all $t$.  Hence, the induced ribbon structure $O_{st}$ draws $\G_{st}$ in $\Sigma$ for all $s$ in a neighborhood of $K$ and for all $t$.

\subsection{Complexity Reduction}

We will now reduce the maximum complexity $c_s$ over $S$ by sliding in $\varepsilon$ cones.  At this point, it is necessary to add to the arguments of Hatcher and Vogtmann in \cite{hatcher-vogtmann}, as the homotopy they described does not necessarily preserve $\T_{\Sigma}$.  After performing canonical splitting as described in the previous subsection, for every $\Gamma_s$ with $s$ in a neighborhood $N$ of $K$ in $S$,  the attaching point $\alpha_j$ of every branch $\beta_j$ is either at the basepoint or in an $\varepsilon$ cone of a critical point.  We will construct a homotopy that perturbs the attaching points $\alpha_j$ that lie in $\varepsilon$ cones.  Consider each attaching point $\alpha_j$ as a map $\alpha_j \colon N \to C^{\varepsilon}_x$ where $s \mapsto x$ for all $s \in K$, and perturbations of $\Gamma_s$ can be viewed as perturbations of the maps $\alpha_j$.  To guarantee that $\Gamma_s$ remains in $\T_{\Sigma}$ throughout the perturbation, note that the image $\alpha_j$ must be contained in two specific downward edges, the downward edges before and after $\beta_j$ in the cyclic order of half edges at $x$.  Sliding $\alpha_j$ in $C^{\varepsilon}_x$ can be viewed as expanding vertex $x$ into an edge $e$ and two vertices $x_1$ and $x_2$, in which exactly one of the downward edges incident to $x$ is now incident to $x_1$ while all others are incident to $x_2$.  As described in Section 2.2, to guarantee that the resulting graph is a point in $\T_{\Sigma}$ the edges incident to $x_1$ and $x_2$ must be chosen in agreement with the cyclic order of the original graph.  We will spend the remainder of this subsection working around this technical issue.

Let $e_1, e_2, \ldots, e_p$ be the half edges incident to $x$ written in cyclic order. For an upward half edge $e_j$, the \emph{positive downward direction} of $e_j$ is the first downward half edge encountered by going forward in the cyclic order of $x$ from $e_j$.  Similarly, the \emph{negative downward direction} of $e_j$ is the first downward half edge encountered by going backward in the cyclic order from $e_j$.  The \emph{attaching interval} of $e_j$ is the intersection of $C^{\varepsilon}_x$ with the positive and negative downward directions of $e_j$; this is the set of possible values $\alpha_j(s)$ after sliding in $C^{\varepsilon}_x$.  Note that the attaching interval of an upward half edge is isometric to the interval of the real line $(-\varepsilon, \varepsilon)$, and we make this identification.

While the value $\alpha_j(s)$ can be any point in the attaching interval of $\beta_j$, this set of possible images is contingent on the attaching points of some of the other branches.  More specifically, an \emph{attaching set} is a maximal consecutive sublist of upward half edges incident to $x$.   Note that every half edge in an attaching set has the same positive and negative downward directions.  The set of possible values $\alpha_j(s)$ depends on the images of the attaching points of the branches in the same attaching set as $\beta_j$.

Let $e_{a_1}, e_{a_2}, \ldots, e_{a_r}$ be an attaching set written in cyclic order, and consider the direct product $\Pi_1^r (-\varepsilon, \varepsilon)$ of the attaching intervals of these half edges.  Let $y_k$ be the $k$-th coordinate of this direct product.  The \emph{attaching space} of the attaching set is the subset of $\Pi_1^r (-\varepsilon, \varepsilon)$ defined by $y_k \leq y_{k+1}$ for all $k$.  The direct product of the attaching spaces of the attaching sets of $x$ is the attaching space of $x$.  The direct product of the attaching spaces of the critical point vertices of $\G_s$ is the attaching space of $\Gamma_s$, denote this set $A_s$.  For example, the closure of the attaching space of the critical point shown in Frame (A) of Figure \ref{fig:attach} is a triangular prism.

Note that $A_s$ is constant as $s$ varies over $K$, so we drop $s$ from the notation.  By taking the closure of $A$ we make $A$ a compact polyhedron.  Further, these definitions can be extended for points $s \in N$ by considering the attaching space of the graph given by collapsing all edges contained in $C^{\varepsilon}_x$.  Indeed, the attaching space $A$ is well defined for all $s \in N$.  Moving $s$ in $N$ moves the attaching points within $A$.  

The attaching space $A$ is the set of images of possible attaching points $\alpha_j$ that guarantee that $\Gamma_s$ remains in $\T_{\Sigma}$.  That is, for $s \in N$ if the image of the attaching points of $\G_{st}$ is in $A$, then $\G_{st}$ can be constructed from $\G_{s0}$ by edge collapses and allowed expansions, and the ribbon structure $O_{st}$ that draws $\G_{st}$ in $\Sigma$ is induced by $O_{s0}$.

Denote product of the maps $\alpha_j  \colon N \to C^{\varepsilon}_x$ as $\alpha \colon N \to A$. For every $s \in K$, the image $\alpha(s)$ is $(0,0, \ldots, 0)$, as $s \in K$ was canonically split in the previous step of the homotopy.  Further, $\alpha(s) \not = (0,0, \ldots, 0)$ for all $s \in N - K$ as complexity decreases when leaving $K$ and entering $N$.  Note that reducing the complexity of $\Gamma_s$ for all $s \in K$ means making $\alpha(K) \cap (0,0, \ldots, 0) = \emptyset$.  

There is one copy of $[-\varepsilon, \varepsilon]$ in $A$ for each extended branch,  $\dim (A) = e_s$ for each $s \in K$.  Recall that $\dim(N) \leq i$ and hence by general position theory for polyhedra, if $e_s > i$, then $\alpha(N)$ can be perturbed to be disjoint from $(0,0, \ldots, 0)$, reducing the complexity of $\Gamma_s$.  That is, $\Gamma_s$ can be perturbed so that for every $s \in K$ there is an extended branch $\beta_j$ whose attaching point has moved off of a critical point.

\begin{figure}
\begin{picture}(99,80)
\put(0,0){\fbox{\includegraphics[scale = .35]{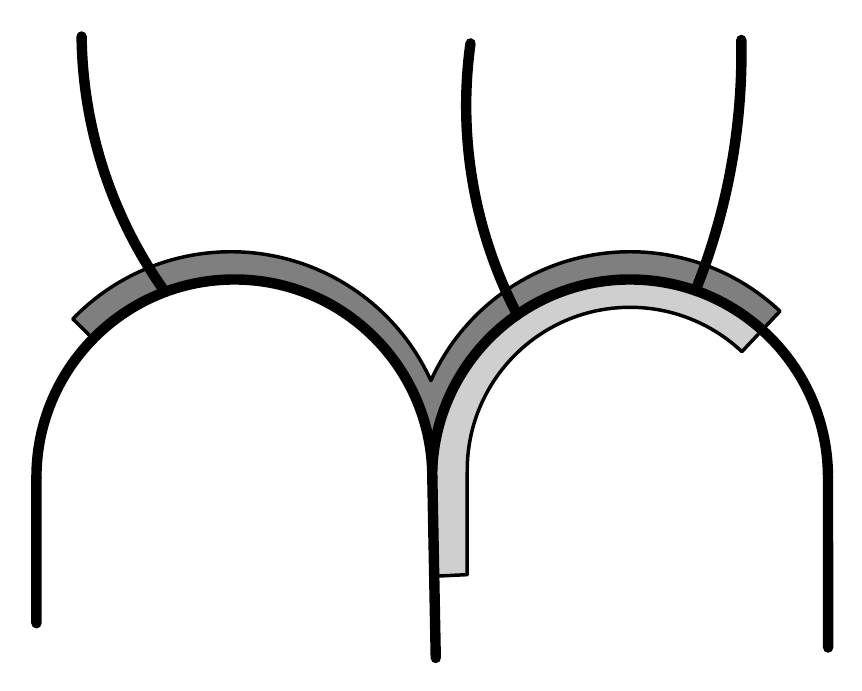}}}
\put(4,63){1}
\put(10,2){2}
\put(51,2){3}
\put(43,63){4}
\put(82,63){5}
\put(80,2){6}
\end{picture}
\caption{}
\label{fig:rcone}
\end{figure}

We can extend the homotopy to points $s$ in a neighborhood of $K$ in $D^k \setminus S$.  Recall that for such $\Gamma_s$, critical points may have bifurcated.  Let $\Gamma_{s'}$ be the graph of a nearby point $s' \in K$.  The endpoints of $C^{\varepsilon}_x$ in $\Gamma_{s'}$ correspond to the endpoints of $H^{\varepsilon}_x$ in $\Gamma_s$.  There is a canonical map from each attaching interval of an extended branch in $\Gamma_{s'}$ to $H^{\varepsilon}_x$ that preserves the endpoint correspondence.  That is, $[-\varepsilon, \varepsilon]$ maps to the geodesic in $H^{\varepsilon}_x$ connecting the corresponding endpoints of $H^{\varepsilon}_x$. Let $r \colon A \to \Pi H^{\varepsilon}_x$ be the product of these maps, where there are $e_{s'}$ factors of $H^{\varepsilon}_x$, one for each upward edge incident to a critical point in $\Gamma_{s'}$. Composing $r$ with the homotopy $\Gamma_{s't}$ extends the homotopy to a full neighborhood of $K$ in $D^k$.  Figure \ref{fig:rcone} shows the image of $r$ in $H_x^{\varepsilon}$ given in Frame (B) of Figure \ref{fig:attach}.  Specifically, the dark shaded region in Figure \ref{fig:rcone} is the image of the attaching space of branch 1 while the light shaded region is the image of the attaching spaces of branches 4 and 5.

Note that $r(\alpha(s'))$ may not be the initial attaching points of upward branches in $\Gamma_s$, and so we precede this homotopy by moving these attaching points to match $r(\alpha(s'))$.  This preceding homotopy is constant on $\Gamma_s$ for all $s \in S$.  

It remains to show that $\G_{st} \in \T_{\Sigma}$ for all $t$.  Given $\G_s \in  \T_{\Sigma}$, the bifurcation of the critical point $x$ can be described in terms of edge expansions.  More specifically, the image under $r$ of the attaching interval of $\beta_j$ is precisely the set of points in $H^{\varepsilon}_x$ that $\beta_j$ can attach at if $\Gamma_s \in \T_{\Sigma}$ and $s$ is in a neighborhood of $K$.   That is, $O_{s't}$ induces a ribbon structure $O_{st}$ that draws $\G_{st}$ in $\Sigma$ for all graphs with attaching points in $r(A)$.  For all $s$ in a neighborhood of $K$, the homotopy $\Gamma_{st}$ preserves $\T_{\Sigma}$.  

Damping down this homotopy outside of $K$, we produce a homotopy that is supported only on a neighborhood of $K$ and that reduces the maximum complexity of $\Gamma_s$ over $s \in N$.  We can repeat this process for each maximum complexity connected component of $S$ until $e_s \leq i$ for all $s \in S$.  

It remains to reduce the degree of $\Gamma_s$ to $k$ for all $s$ in some neighborhood of $S$.  This occurs exactly as described by Hatcher and Vogtmann in Section 4.7 of \cite{hatcher-vogtmann}, through a final application of canonical splitting.  By the arguments given in Subsection 4.3, canonical splitting preserves $\T_{\Sigma}$.

We have homotoped $f$ so that for all $s$ in a neighborhood of $S$, we have $\deg(\Gamma_s) \leq k$.  We can perform this sequence of homotopies on each connected component of $\mathscr{S}_i - \mathscr{S}_{i-1}$, and hence for all $s$ in a neighborhood of $\mathscr{S}_i$, we have monotonically reduced degree so that $\deg(\Gamma_s) \leq k$.  This completes our induction step, and homotopes the image of $f$ into $T_{\Sigma,k}$.  This completes the proof of Theorem \ref{thm:main}.

\section{\bf The Degree 2 Complex and Open Questions}

Our motivation for Theorem \ref{thm:main} comes from successful applications of Theorem \ref{thm:hv} to compute a presentation for and study the homology of $Aut(F_n)$ \cite{afv} \cite{hatcher-vogtmann}.  Both of these works took advantage of the fact that $S\A_{n,2} / Aut(F_n)$ stabilizes for $n \geq 4$.  In this case, $S\A_{n,2} / Aut(F_n)$ has 9 vertices, 13 edges, and 7 faces.  For small values of $g$ and $p$, the complex $S\T_{\Sigma, 2} / \m$ is much larger.  For example, when $g = 2$ and $p = 1$, this complex has 27 vertices, 110 edges, and 63 faces.  Small values of $g$ and $p$ represent the best case scenario,  $\ST_{\Sigma, 2} / \m$ grows exponentially as either the genus or the number of punctures of $\Sigma$ increases.

\begin{proposition}
Let $(\Sigma, \psi)$ be a marked genus $g$ surface with $p$ punctures where $g \geq 2$ is even and $p \geq 1$ is odd.  There are at least $2^{(p-1)/2 + g/2 + 1}$ vertices in $\ST_{\Sigma, 2} / \m$.
\label{prop:unbound}
\end{proposition}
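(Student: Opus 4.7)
The action of $\m$ on $\ST_\Sigma$ alters only the marking, and any two markings on a given basepointed ribbon graph $(\Gamma, O)$ compatible with $\Sigma$ differ by an orientation-preserving, basepoint-preserving self-homeomorphism of $\Sigma$. Consequently vertices of $\ST_{\Sigma,2}/\m$ correspond bijectively to isomorphism classes of basepointed ribbon graphs $(\Gamma,O)$ of degree at most $2$ whose ribbon surface $|\Gamma,O|$ is homeomorphic (rel basepoint) to $\Sigma$. I will obtain the lower bound by producing $2^{m+q+1}$ such classes among ribbon \emph{roses} (degree-$0$ graphs), where $m = g/2$ and $q = (p-1)/2$. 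A rose has $n = 2g+p-1$ loops at its unique vertex and is specified, up to isomorphism, by a cyclic word of length $2n$ in letters $a_i^{\pm 1}$ modulo cyclic rotation and permutation/inversion of letters; the genus and puncture count of $|\Gamma,O|$ are combinatorial invariants of this word.

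For each binary string $\epsilon \in \{0,1\}^{m+q+1}$ I would construct a cyclic word $w_\epsilon$ by concatenating, in a fixed cyclic order, $m+q+1$ short \emph{gadgets} separated by distinguished \emph{anchor} sub-words: $m$ handle-pair gadgets contributing $(2,0)$ to the genus/puncture count, $q$ puncture-pair gadgets contributing $(0,2)$, and one extra gadget absorbing the residual handle and the residual puncture so that the total surface invariants sum to $(g,p)$. Each gadget admits two variants of the same length and the same surface contribution, selected by the corresponding bit of $\epsilon$, while the anchor sub-words employ dedicated letters assembled in a pattern that does not occur inside any gadget variant. Thus every $w_\epsilon$ encodes a ribbon rose with ribbon surface homeomorphic to $\Sigma$.

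To show that the resulting roses $R_\epsilon$ are pairwise non-isomorphic, I would argue that any basepointed ribbon-graph isomorphism corresponds at the level of cyclic words to a cyclic rotation together with a permutation and (possibly) inversion of letters; the rigidity of the anchor pattern forces any such map to preserve the gadget decomposition, and within each gadget a local combinatorial invariant---such as the interleaving distance around the vertex between the two endpoints of a distinguished chord---detects the variant choice. Distinct strings $\epsilon$ thus yield distinct isomorphism classes of basepointed ribbon roses, producing at least $2^{m+q+1}$ vertices of $\ST_{\Sigma,2}/\m$. The principal obstacle is the dual design constraint on the gadget variants: the two variants of a single gadget must produce identical contributions to the ribbon surface's $(g,p)$ while remaining distinguishable up to letter relabeling. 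I expect to handle this by taking the two variants to be related by a local, surface-preserving move on the chord diagram (in the spirit of a Whitehead move) that nevertheless alters an easily-computed local interleaving invariant, and by making the anchor sub-words complex enough to preclude spurious cyclic symmetries permuting one gadget with another.
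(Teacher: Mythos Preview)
Your strategy is essentially the same as the paper's: exhibit a family of ribbon structures on the rose $R_n$, indexed by binary strings, by concatenating ``blocks'' that each admit two variants with the same surface contribution. The paper makes this concrete where you leave it schematic. It fixes a specific marking $\psi$ whose ribbon structure $O$ decomposes as a list of explicit four-letter blocks $\mathcal{A}_j = a_j^i, a_j^t, a_{j+1}^i, a_{j+1}^t$ (one per pair of extra punctures) and eight-letter blocks $\mathcal{B}_j$ (one per pair of handles), and then observes that applying the automorphism $\rho_j$ (right-multiply $a_j$ by $a_{j+1}^{-1}$) to the marking replaces the corresponding block by an explicit alternate $\mathcal{A}'_j$ or $\mathcal{B}'_j$ while keeping the ribbon surface unchanged. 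So the paper's ``gadget variants'' are nothing more exotic than the effect of a single Nielsen automorphism on a block, and verifying that the surface is preserved is immediate from the pictures rather than from a genus/puncture bookkeeping argument.

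Two remarks on the comparison. First, your care about ribbon-graph isomorphism (as opposed to literal equality of cyclic words) is well placed: the orbit invariant really is the isomorphism class, and the paper asserts ``these ribbon structures are distinct'' without elaborating. In the paper's explicit construction this is not hard to check---the local interleaving patterns of $\mathcal{A}_j$ versus $\mathcal{A}'_j$ (adjacent loop ends versus ends three apart) and of $\mathcal{B}_j$ versus $\mathcal{B}'_j$ are already intrinsic, so no separate ``anchor'' sub-words are needed. Second, your proposal remains an outline: you identify the design constraint (same $(g,p)$ contribution, different local invariant) but do not produce the gadgets. The paper's blocks show that such gadgets exist and are simple, so your plan would go through once you write them down; but as it stands the heart of the argument is deferred to ``I expect to handle this,'' whereas the paper's proof is a two-paragraph explicit construction.
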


Proposition \ref{prop:unbound} investigates a subset of the orbits of vertices with underlying graph $R_{2g + p -1}$.  The idea of the proof is that the number of orbits in this subset doubles with each increase of 2 in the genus or the number of punctures of $\Sigma$.  The tools used in the proof of Proposition \ref{prop:unbound} are developed in the following example.

\begin{example}
Let $g = 2$ and $p = 3$ and consider the marking $\psi$ for $\Sigma$ shown in Frame (A) of Figure \ref{fig:torus}, where the symbols $\{ a_1, a_2, a_3, a_4, a_5, a_6 \}$ are the generators of $\pi_1(R_6)$.  Let $id \colon R_6 \to R_6$ be the identity map, and note that $(R_6, id)$ can be drawn in $(\Sigma, \psi)$ by Frame (A) of Figure \ref{fig:torus}.  More formally, the ribbon structure that draws $(R_6, id)$ in $(\Sigma, \psi)$ is
\begin{equation*}
a^i_1, a^t_1, a^i_2, a^t_2, a^i_3, a^i_4, a^t_3, a^t_4, a^i_5, a^i_6, a^t_5, a^t_6,
\end{equation*}
where for a directed edge $e$ the initial half edge is denoted $e^i$ and the terminal half edge is denoted $e^t$.  Let $\rho_j \in Aut(F_6)$ be the automorphism that right multiplies $a_j$ by the inverse of $a_{j+1}$ and leaves all other generators fixed.  Consider the vertex $(R_6, id)\rho_1$; recall that the action of $\rho_1$ on $(R_6, id)$ can be expressed by altering the labeling on $R_6$ by $\rho_1^{-1}$.  Let $e_j$ denote the labelling $a_ja_{j+1}$ and note that $(R_6, id)\rho_1$ can be drawn in $(\Sigma, \psi)$ using the ribbon structure
\begin{equation*}
e^i_1, a^i_2, a^t_2, e^t_1, a^i_3, a^i_4, a^t_3, a^t_4, a^i_5, a^i_6, a^t_5, a^t_6.
\end{equation*}
The ribbon surface for $(R_6, id)\rho_1$ given by this ribbon structure is shown in Frame (B) of Figure \ref{fig:torus}.  Note that $(R_6, id)\rho_4$ can also be drawn in $(\Sigma, \psi)$ using the ribbon structure
\begin{equation*}
a^i_1, a^t_1, a^i_2, a^t_2, a^i_3, e^i_4, a^t_3, a^i_5, a^i_6, a^t_5, e^t_4, a^t_6.
\end{equation*}
The ribbon surface for $(R_6, id)\rho_4$ given by this ribbon structure is shown in Frame (C) of Figure \ref{fig:torus}.  The alterations to the ribbon structure that draws $(R_6, id)$ in $(\Sigma, \psi)$ caused by the actions of $\rho_1$ and $\rho_4$ are independent.  That is, $(R_6, id)\rho_1\rho_4$ can be drawn in $(\Sigma, \psi)$ using the ribbon structure
\begin{equation*}
e^i_1, a^i_2, a^t_2, e^t_1, a^i_3, e^i_4, a^t_3, a^i_5, a^i_6, a^t_5, e^t_4, a^t_6.
\end{equation*}
The ribbon surface for $(R_6, id)\rho_1\rho_4 $ given by this ribbon structure is shown in Frame (D) of Figure \ref{fig:torus}.  

\begin{figure}
\begin{picture}(315, 200)
\put(0,110){\subfloat[]{\fbox{\includegraphics[scale = .35]{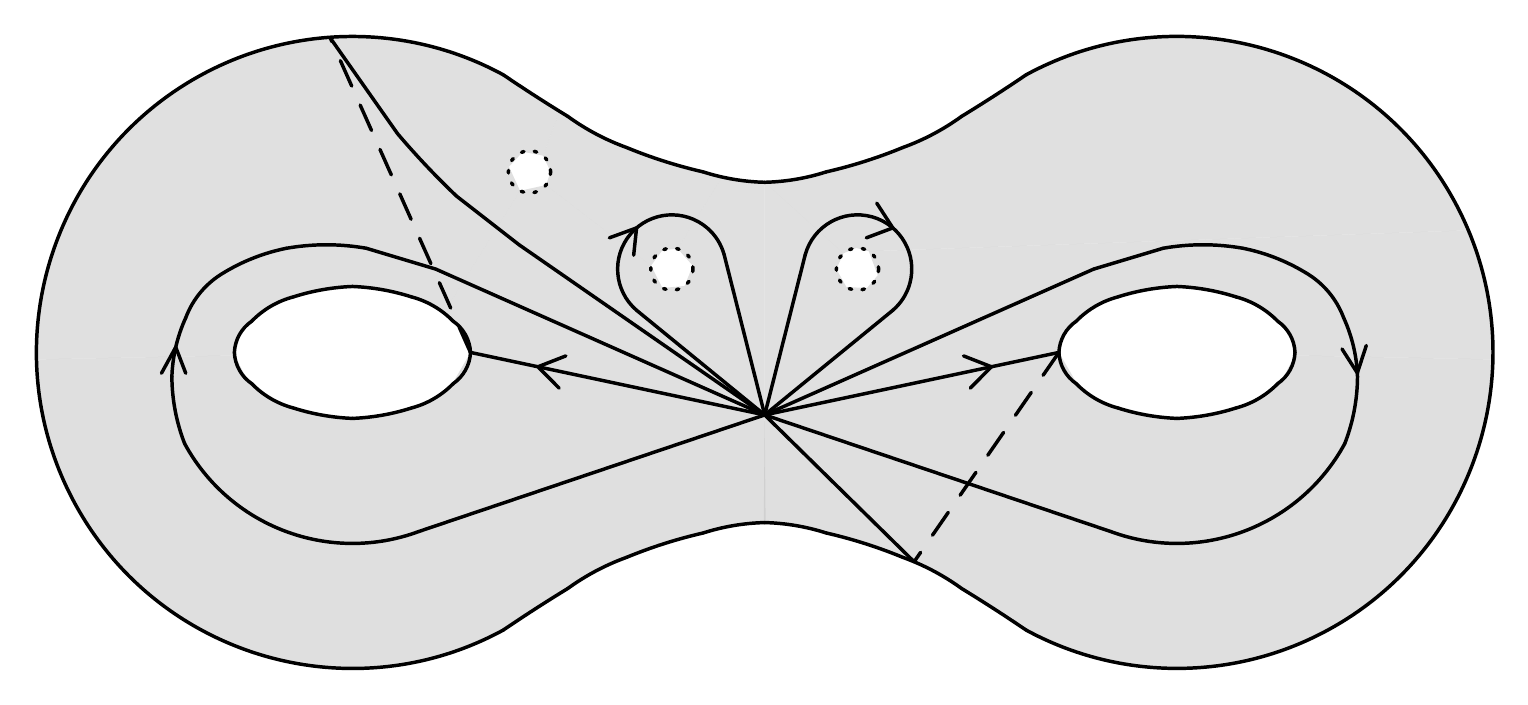}}}}
\put(160,110){\subfloat[]{\fbox{\includegraphics[scale = .35]{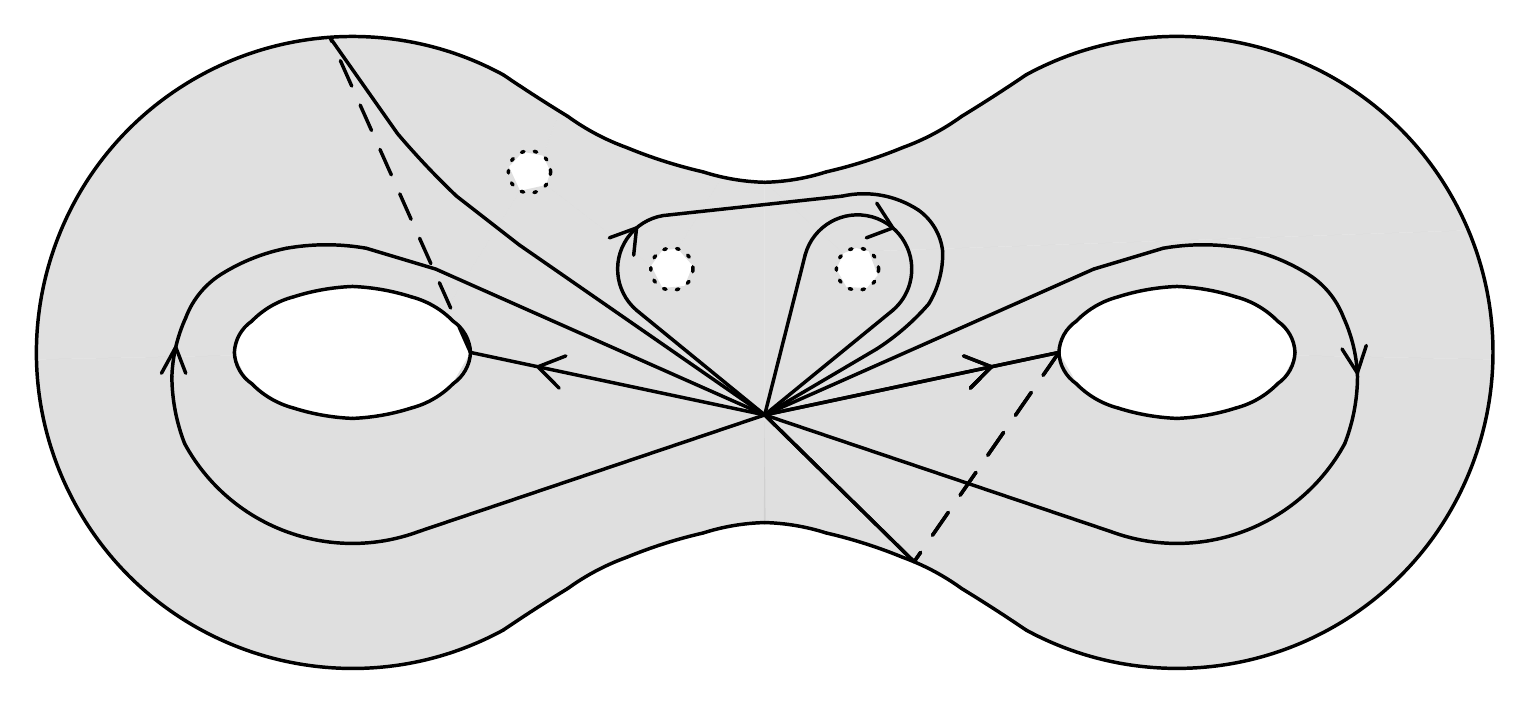}}}}
\put(0,10){\subfloat[]{\fbox{\includegraphics[scale = .35]{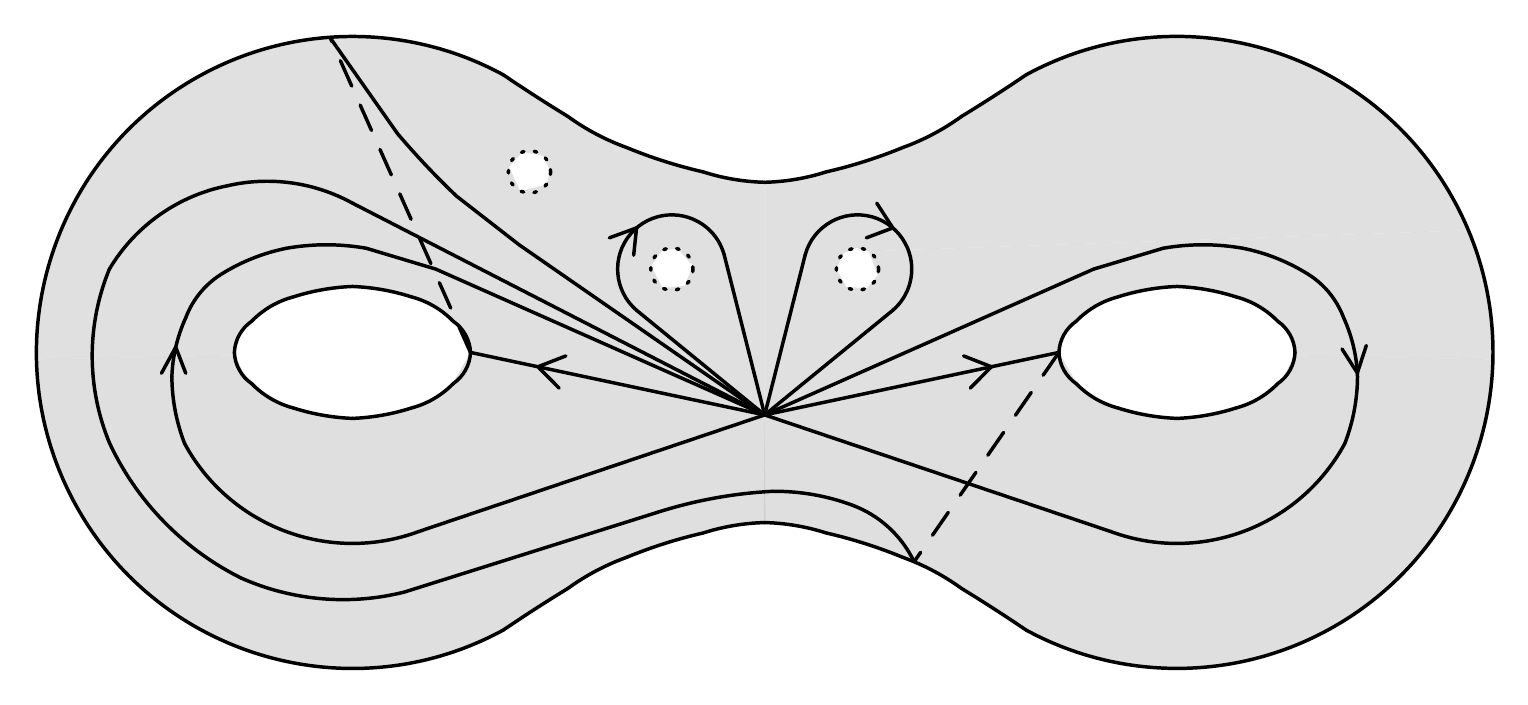}}}}
\put(160,10){\subfloat[]{\fbox{\includegraphics[scale = .35]{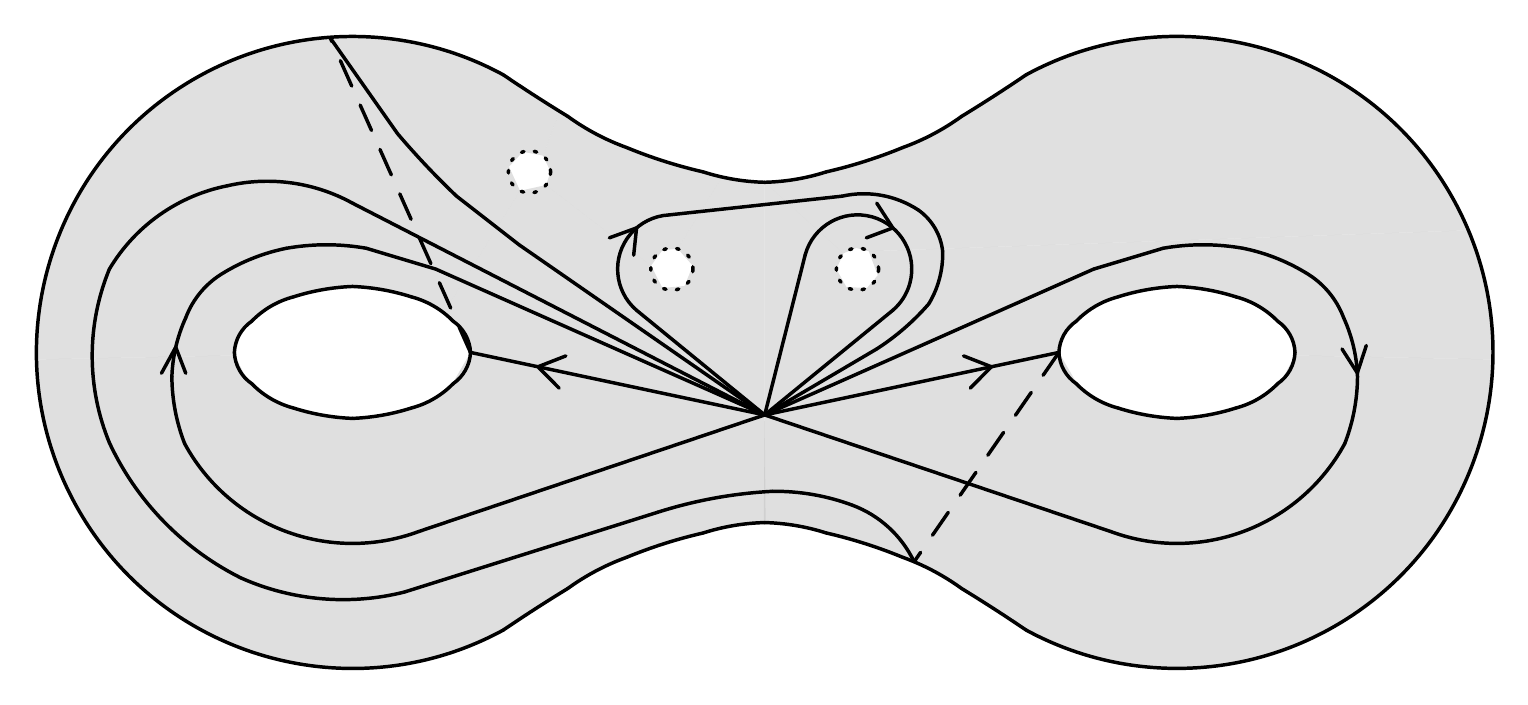}}}}
\put(59,161){$a_1$}
\put(93,161){$a_2$}
\put(120,119){$a_3$}
\put(108,134){$a_4$}
\put(35,119){$a_5$}
\put(53,136){$a_6$}
\put(219,161){$e_1$}
\put(233,149){$a_2$}
\put(280,119){$a_3$}
\put(268,134){$a_4$}
\put(195,119){$a_5$}
\put(213,136){$a_6$}
\put(219,60){$e_1$}
\put(233,49){$a_2$}
\put(279,19){$a_3$}
\put(267,34){$e_4$}
\put(194,32){$a_5$}
\put(212,37){$a_6$}
\put(59,61){$a_1$}
\put(93,61){$a_2$}
\put(119,19){$a_3$}
\put(107,34){$e_4$}
\put(34,32){$a_5$}
\put(52,37){$a_6$}
\end{picture}
\caption{}
\label{fig:torus}
\end{figure}

While the four vertices $(R_6, id)$, $(R_6, id)\rho_1$, $(R_6, id)\rho_4$, and $(R_6, id)\rho_1\rho_4 $ are in the same orbit of $S\A_{6,2}$ under the action of $Aut(F_6)$, they are not in the same orbit of $\m$.  The action of $\m$ on $\ST_{\Sigma}$ does not alter the ribbon structure, only the labeling on the graph, and these three vertices have distinct ribbon structures.  These vertices are representatives of distinct orbits in $S\T_{\Sigma}$ under the action of $\m$.

The effect of $\rho_1$ on the ribbon structure of $(R_6, id)$ is to alter a length four sublist.  Let 
\begin{eqnarray*}
\mathcal{A}_j &=& a^i_j, a^t_j, a^i_{j+1}, a^t_{j+1}, \; \mathrm{and}\\
\mathcal{A}'_j&=& e^i_j, a^i_{j+1}, a^t_{j+1}, e^t_j.  
\end{eqnarray*}
In terms of this notation, $\rho_1$ altered the ribbon structure by replacing $\mathcal{A}_1$ with $\mathcal{A}'_1$.  Similarly, the effect of $\rho_4$ on the ribbon structures of $(R_6, id)$ and $(R_6, id)\rho_1$ is to alter a length eight sublist.  Let 
\begin{eqnarray*}
\mathcal{B}_j &=& a^i_{j-1}, a^i_j, a^t_{j-1}, a^t_j, a^i_{j+1}, a^i_{j+2}, a^t_{j+1}, a^t_{j+2}, \; \mathrm{and}\\
\mathcal{B}'_j &=& a^i_{j-1}, e^i_j, a^t_{j-1}, a^i_{j+1}, a^i_{j+2}, a^t_{j+1}, e^t_j, a^t_{j+2}.  
\end{eqnarray*}
The alteration caused by $\rho_4$ replaces $\mathcal{B}_4$ with $\mathcal{B}'_4$.  We will refer to these sublists $\mathcal{A}_j$, $\mathcal{A}'_j$, $\mathcal{B}_j$, and $\mathcal{B}'_j$ as \emph{blocks}.
These block alteration patterns are the basis of the proof of Proposition  \ref{prop:unbound}.  
\end{example}

\begin{proof}[Proof of Proposition \ref{prop:unbound}]
Note that demonstrating the proposition for a particular marking $\psi$ suffices to prove it for all $\psi$, since $S\T_{\Sigma,2} / \m$ is independent of $\psi$.  We choose $\psi$ shown in Figure \ref{fig:gentorus}.  This figure draws $(R_n, id)$ in $(\Sigma, \psi)$ by the ribbon structure
\begin{equation*}
O = \mathcal{A}_1, \mathcal{A}_3, \ldots, \mathcal{A}_{p-1}, \mathcal{B}_{p+2}, \mathcal{B}_{p+6}, \ldots \mathcal{B}_{n-2},  
\end{equation*} 
where $n = 2g+p-1$.  Consider a subset $C$ of the set of automorphisms $\{ \rho_1$, $\rho_3$, \ldots, $\rho_{p-1}$, $\rho_{p+2}$, $\rho_{p+6}$, \ldots, $\rho_{n-2} \}$.  Let $\sigma_C$ be the product of the automorphisms in $C$. Note that $(R_n, id)\sigma_C \in S\T_{\Sigma}$ as $(R_n, id)\sigma_C$ is drawn in $(\Sigma, \psi)$ by a ribbon structure given by altering the blocks of $O$.  Specifically, for each $\rho_k \in C$ we switch $\mathcal{A}_k$ with $\mathcal{A}'_k$ if $k \leq p-1$ and switch $\mathcal{B}_k$ with $\mathcal{B}'_k$ if $k \geq p+2$, and leave all the other blocks fixed.  For each subset $C$, these ribbon structures are distinct, and so each vertex $(R_n, id)\sigma_C$ in $S\T_{\Sigma, 2}$ represents a distinct orbit under the action of $\m$.  Hence, there are at least $2^{(p-1)/2 + g/2 + 1}$ vertices in $S\T_{\Sigma, 2} / \m$.
\end{proof}

\begin{figure}
\begin{picture}(315, 100)
\put(0,0){\fbox{\includegraphics[scale = .42]{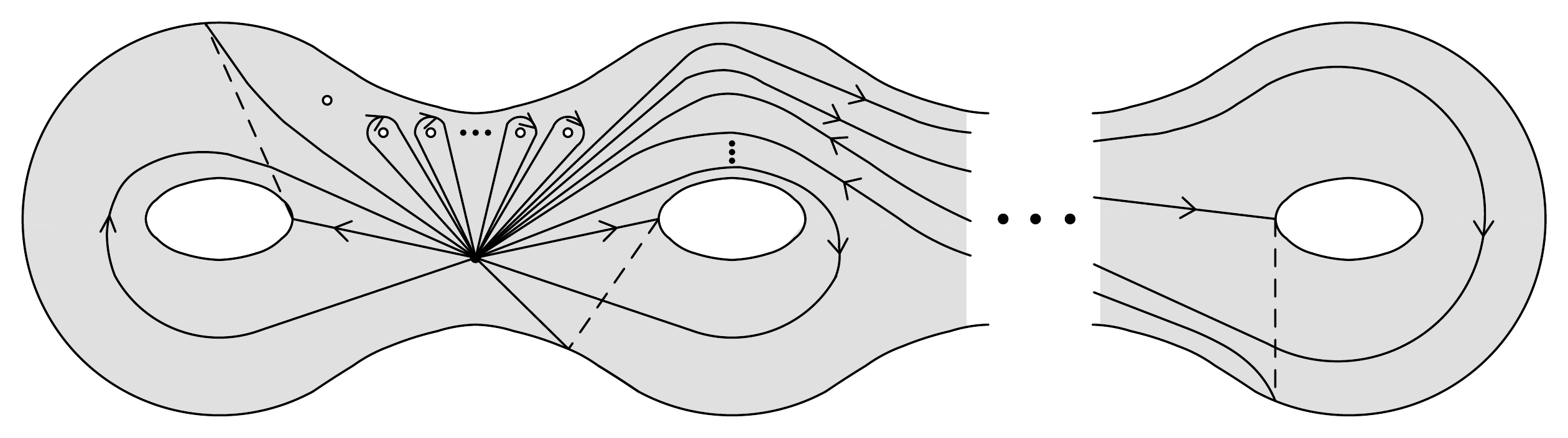}}}
\put(70,74){$a_1$}
\put(80,72){$a_2$}
\put(92,71){$a_{p-1}$}
\put(110,77){$a_p$}
\put(260,60){$a_{p+1}$}
\put(225,52){$a_{p+2}$}
\put(167,30){$a_{n-3}$}
\put(92,12){$a_{n-2}$}
\put(33,12){$a_{n-1}$}
\put(60,34){$a_n$}
\end{picture}
\caption{}
\label{fig:gentorus}
\end{figure}

While the methods of \cite{afv} could be applied to $S\T_{\Sigma, 2} / \m$, the resulting presentation for $\m$ is unlikely to be simpler than existing presentations as the quotient $S\T_{\Sigma, 2} / \m$ is relatively large for small values of $g$ and $p$.  A better approach would be to improve on the simplicial complex $S\T_{\Sigma, 2}$.  That is, does $S\T_{\Sigma, 2}$ contain a simply connected subcomplex that is preserved under the action of $\m$?  This question remains open, a small enough complex could be fruitful in producing a presentation.  More to the point, does $S\T_{\Sigma, k}$ contain a $(k-1)$-connected subcomplex that is preserved under the action of $\m$ and that remains constant as the genus and number of punctures of $\Sigma$ increases?  This stabilization of $S\A_{n,k}$ is the primary property used by Hatcher and Vogtmann in \cite{hatcher-vogtmann}.  While the analogs of Hatcher and Vogtmann's homology results were already shown for mapping class groups by Harer in \cite{harer}, the question of the stabilization of the subcomplexes $S\T_{\Sigma, k}$ remains both open and interesting.

\section*{Acknowledgements}

I thank my advisor Karen Vogtmann.  This work contains part of the results from my Ph.D. thesis, and without Karen's support and insight this work would not have been possible.  I also thank Allen Hatcher for several helpful conversations.  Lastly, I thank the referee, whose comments not only significantly improved the exposition of this work but also improved some of the results. 

\bibliographystyle{plain}

\end{document}